\documentclass[12pt]{amsart}

\usepackage{amsmath,
amsthm,amsfonts,amssymb, latexsym,enumerate,url}

\usepackage{tikz}

\usepackage[colorlinks=true, 
linkcolor = blue,
citecolor=black,
urlcolor=black,
]{hyperref}

\usepackage[backend=biber,
sorting=nyt,
style=numeric,
sortcites=true,
maxnames=99,
minnames=99,
]{biblatex}
\DeclareFieldFormat{prefixnumber}{\mkbibbold{#1}}
\DeclareFieldFormat{labelnumber}{\mkbibbold{#1}}
\DeclareFieldFormat[article]{number}{\mkbibparens{#1}}
\DeclareFieldFormat{addendum}{\mkbibparens{#1}}

\renewbibmacro*{note+pages}{%
  \printfield{note}
  \setunit{\addspace}
  \printfield{pages}
}

\DeclareFieldFormat{pages}{#1} 
\DeclareFieldFormat{vol}{#1}
\renewbibmacro{in:}{}

\DeclareFieldFormat[article]{title}{\textnormal{#1}}  

\DeclareFieldFormat[article]{volume}{\textbf{#1}}

\makeatletter
\renewenvironment{proof}[1][\proofname]{\par\pushQED{\qed}%
	\normalfont \topsep6\p@\@plus6\p@\relax
	\trivlist
	\item\relax
		{\bfseries
	#1\@addpunct{.}}\hspace\labelsep\ignorespaces}{%
	\popQED\endtrivlist\@endpefalse
}
\makeatother

\renewcommand{\bf}{\textbf}
\renewcommand{\it}{\textit}
\newcommand{\bb}[1]{\mathbb{#1}}
\newcommand{\te}[1]{\text{#1}}
\newcommand{\ce}[1]{\mathcal{#1}}

\newcommand{\Q}{\bb{Q}}

\newcommand{\X}{\mathcal{X}}
\newcommand{\Y}{\mathcal{Y}}

\renewcommand{\bar}[1]{\mkern 1.5mu\overline{\mkern-1.5mu#1\mkern-1.5mu}\mkern 1.5mu}

\newcommand{\ls}{<}
\newcommand{\g}{>}

\newcommand{\OR}{\mathcal{O}}

\newcommand{\nr}[1][G]{\textnormal{\bf{N}}_{#1}}
\newcommand{\cn}[1][G]{\textnormal{\bf{C}}_{#1}}
\newcommand{\op}{\textnormal{\bf{O}}}
\newcommand{\zn}{\textnormal{\bf{Z}}}

\newcommand{\IRR}{\textnormal{Irr}}

\newcommand{\GAL}{\textnormal{Gal}}

\newcommand{\XT}{\textnormal{Ext}}
\newcommand{\HD}{\textnormal{H}}
\newcommand{\LIN}{\textnormal{Lin}}

\renewcommand{\subset}{\subseteq}
\renewcommand{\supset}{\supseteq}

\begin{document}

\theoremstyle{plain}

\newtheorem{thm}{Theorem}[section]
\newtheorem{lem}[thm]{Lemma}
\newtheorem{conj}[thm]{Conjecture}
\newtheorem{pro}[thm]{Proposition}
\newtheorem{cor}[thm]{Corollary}
\newtheorem{que}[thm]{Question}
\newtheorem{rem}[thm]{Remark}
\newtheorem{defi}[thm]{Definition}
\newtheorem{hyp}[thm]{Hypothesis}

\newtheorem*{thmA}{THEOREM A}
\newtheorem*{thmB}{THEOREM B}
\newtheorem*{corC}{COROLLARY C}

\newtheorem*{thmC}{THEOREM C}
\newtheorem*{conjA}{CONJECTURE A}
\newtheorem*{conjB}{CONJECTURE B}
\newtheorem*{conjC}{CONJECTURE C}

\newtheorem*{thmAcl}{Main Theorem$^{*}$}
\newtheorem*{thmBcl}{Theorem B$^{*}$}

\numberwithin{equation}{section}

\marginparsep-0.5cm

\renewcommand{\thefootnote}{\fnsymbol{footnote}}
\footnotesep6.5pt

\title{Above Isaacs' Head Characters}

\author{Asier Arranz }
\address{Departamento de Matemáticas Fundamentales, UNED, 28040, Madrid, Spain}
\email{aarranz97@alumno.uned.es}


\subjclass[2020]{Primary 20C15; Secondary 20D10}

\keywords{Head Characters, Carter subgroups, McKay conjecture}

\begin{abstract}
The proof of the inductive McKay condition has been shown to imply that the character theory above the characters of degree not divisible by $p$ of a normal subgroup is locally determined. In this note, we establish a similar result for the Isaacs' head characters of a normal solvable subgroup of an arbitrary group. In particular, we give a new lower bound of the number of conjugacy classes of a finite group in terms of the Carter subgroups of any of its normal solvable subgroups. 
\end{abstract}

\thanks{This research was supported by a \it{Beca de Colaboración} from the Ministerio de Educación, Formación Profesional y Deportes (Spain), awarded for collaboration with the Departamento de Matemáticas Fundamentales at UNED. This work is part of my PhD thesis directed by G. Navarro, whom I thank for their guidance and useful comments. I also thank the anonymous referee for their meticulous reading and for the constructive comments that have significantly improved the presentation.}

\maketitle

\section{Introduction} 

One of the, perhaps unexpected, consequences of the proof \cite{B25} of the inductive McKay condition \cite{IMN} is that
the character theory above characters of $p'$-degree of normal subgroups is also locally determined (see \cite{R23}).
Specifically, if $N$ is a normal subgroup of a finite group $G$, $\theta \in \IRR(N)$ is an irreducible complex character of $N$
of degree not divisible by $p$, and $Q$ is a Sylow $p$-subgroup of $N$, then there is a corresponding $\theta^* \in \IRR(\nr[N](Q))$ such that
the character theory of $G$ over $\theta$ is \it{equivalent} to the character theory of $\nr(Q)$ over $\theta^*$.

In \cite{I22}, for a finite solvable group $G$,
I. M. Isaacs constructed a bijection from the set \(\LIN(C)\) of linear characters of a Carter subgroup $C$ of $G$ onto $\HD(G)\subseteq \IRR(G)$, the so called set of head 
characters of $G$.
As shown by G. Navarro in \cite{N22}, this is the ``McKay conjecture” for the formation of nilpotent groups. (For the formation of
$p$-groups, Navarro recovered the original McKay conjecture for solvable groups.)

The purpose of this paper is to show that, above an Isaacs' bijection  of any normal solvable subgroup
$N$ of a finite group $G$, not necessarily solvable,  the character theory of $G$  over a head character
is ``equivalent” to the character theory of the normalizer of a Carter subgroup of $N$ over a linear character of $C$.
This has some consequences for every finite group. For instance, we give a new lower bound for the number \(k(G)\) of conjugacy classes of a finite group $G$ in terms of the Carter subgroups of any normal solvable subgroup of $G$. There does not seem to be a direct group theoretical proof of this fact. 

\medskip

\begin{thmA}\label{thmA:A}
Let $N$ be a normal solvable subgroup of a finite group $G$ and let $C$ be a Carter subgroup of $N$. Then there exists a \((\nr(C)\times \GAL(\Q_{|G|}/\Q))\)-equivariant bijection 
\[{}^*:\HD(N)\rightarrow \textnormal{Lin}(C)\]
such that the character-triples \((G_\theta,N,\theta)\) and \((\nr(C)_{\theta^*},C,\theta^*)\) are isomorphic for every \(\theta\in \HD(N)\). Also, if \(\ce{A}\) denotes the set of irreducible characters of $G$ lying over some member of \(\HD(N)\), then there exists a bijection
\[':\ce{A}\rightarrow \IRR(\nr(C)/C')\]
such that \(\chi(1)/\chi'(1)  = \theta(1)\) for each \(\chi\in \ce{A}\) and \(\theta\in \HD(N)\) lying under \(\chi\). As a consequence, \(\chi(1)/\chi'(1)\) divides \(|N:C|\), and  \(k(G)\geqslant k(\nr(C)/C')\) with equality if and only if \(N\) is abelian. 
\end{thmA}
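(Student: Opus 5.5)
Isaacs' construction in \cite{I22} goes down a chain of normal subgroups, so I would reduce along the nilpotent residual chain of $N$, by induction on $|N|$. If $N$ is nilpotent, then $C=N$, $\HD(N)=\LIN(N)$ (which I take from \cite{I22}), and the identity map already works. Otherwise let $K=N^{\mathcal{N}}\neq 1$ be the nilpotent residual of $N$. It is characteristic in $N$, hence normal in $G$, and $N=KC$. I would use the structure from \cite{I22}: the head characters of $N$ are exactly the $\theta\in\IRR(N)$ whose restriction to $K$ is irreducible in a controlled way relative to $C\cap K$, and the bijection $\LIN(C)\to\HD(N)$ is built by successive steps $L\lhd M$ with $M/L$ nilpotent. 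The simplest model step is the following. Take $K$ with $C\cap K$ small, say a coprime or ``Glauberman-type'' situation where $C$ acts on a normal subgroup $L$ of $N$. There one must show that the $C$-invariant characters of $L$ lying in the relevant set correspond to characters of $\cn[L](C)$ or of $C\cap L$. I would argue that at each stage Isaacs' map is canonical, that is, it commutes with automorphisms and Galois action. Since $\nr(C)$ normalizes $N$ and $C$, the map ${}^*$ obtained by inverting Isaacs' bijection is then $(\nr(C)\times\GAL(\Q_{|G|}/\Q))$-equivariant.

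The character-triple isomorphism is the core of the argument. By the Frattini argument, $G=N\nr(C)$, because all Carter subgroups of $N$ are $N$-conjugate. Hence $G_\theta=N\nr(C)_\theta$, and equivariance gives $\nr(C)_\theta=\nr(C)_{\theta^*}$. I would then apply the standard criterion for building an isomorphism $(G_\theta,N,\theta)\to(\nr(C)_{\theta^*},C,\theta^*)$ in the situation $G=NH$, $C=N\cap H$ with $H=\nr(C)_\theta$. I plan to use Navarro--Späth type results: it is enough to have $\cn(N)\subseteq H$ (which can be arranged after passing to $G/\cn(N)$-type reductions, or handled with the ``central isomorphism'' notion) together with suitable projective representations. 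I would construct these inductively along the chain $N>K_1>\dots$, composing isomorphisms via the transitivity and ``going-down'' lemmas for character triples. At each step I would use the coprime/nilpotent correspondence: the Glauberman-style fact that $\theta_{K}$ restricts to a unique character over $\theta^*$ on $C\cap K$ with multiplicity coprime to something. This gives a ``good'' restriction, and from it a triple isomorphism in the spirit of \cite{R23}. I expect this inductive triple isomorphism to be the main obstacle, especially the verification that Isaacs' construction is compatible with the projective representations, not merely with characters.

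The second bijection then follows formally. By Clifford theory and Gallagher-type counting, triple isomorphisms induce degree-ratio-preserving bijections $\IRR(G_\theta|\theta)\to\IRR(\nr(C)_{\theta^*}|\theta^*)$, which I then induce to $G$ and to $\nr(C)$. I sum over $\nr(C)$-orbits on $\HD(N)$; these match $G$-orbits, because $G=N\nr(C)$ and $\HD(N)$ consists of $N$-invariant... (more precisely, $G$-orbits on $\HD(N)$ meet the $\nr(C)$-orbits bijectively by Frattini together with the equivariance). The characters of $\nr(C)$ over linear characters of $C$ are exactly $\IRR(\nr(C)/C')$, and the degree ratios give $\chi(1)/\chi'(1)=\theta(1)/\theta^*(1)=\theta(1)$. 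Isaacs shows that $\theta(1)$ divides $|N:C|$, giving the divisibility claim. Finally, $k(G)\geq|\ce{A}|=k(\nr(C)/C')$. Equality forces $\IRR(G)=\ce{A}$, so $1_N$ is the only constituent over... Here I would argue that every $\theta\in\IRR(N)$ must be a head character. Since head characters number $|C:C'|$, this gives $k(N)\le |C:C'|\le |N:N'|$, so all characters of $N$ are linear and $N$ is abelian. Conversely, if $N$ is abelian then $C=N$ and both sides coincide.
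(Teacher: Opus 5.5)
Your proposal has a genuine gap. You yourself call the inductive triple isomorphism ``the main obstacle'', and it is essentially the whole content of the theorem; the proposal does not supply it.

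Invoking a Navarro--Sp\"ath-type criterion only turns the problem into constructing compatible projective representations, which you never do. The tool you propose for the inductive steps is also not available. The Carter subgroup \(C\) need not act coprimely on \(K\): for \(N=S_3\times C_3\), the prime \(3\) divides both \(|K|\) and \(|C|\). In any case, the relevant correspondence is Isaacs' canonical bijection \(\IRR_C(K)\to\IRR_C(L)\) for the fixed-point-free section \(K/L\), with \(L=K'\). It is not a Glauberman-type correspondence with \(C\cap K\), which lies in \(L\).

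The missing mechanism is as follows. Put \(U=LC\) and \(H=L\nr(C)\). One needs an \((H\times\GAL(\Q_{|G|}/\Q))\)-equivariant bijection \(\chi\mapsto\chi^*\) with these properties:
\begin{enumerate}
\item it goes from the characters of \(N\) restricting irreducibly to \(K\) onto those of \(U\) restricting irreducibly to \(L\);
\item it is compatible with the canonical map, so it carries \(\HD(N)\) onto \(\HD(U)\);
\item it satisfies \((G_\chi,N,\chi)\cong(H_{\chi^*},U,\chi^*)\).
\end{enumerate}
This is Theorem~\ref{thm:indbij}. Theorem~A then follows by composing it with the inductive bijection for \(U\lhd H\).

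Theorem~\ref{thm:indbij} is proved by induction on \(|G:L|\). One refines \(K/L\) to chief factors of \(G\) and splits, via Going-Down, into the induced, restricted and fully ramified cases. The fully ramified case is the heart of the argument.
\begin{enumerate}
\item If \(p\mid|N:K|\), one passes to \(P/L=\op_p(\zn(U/L))\), which centralizes \(K/L\), and needs Lemma~\ref{lem:eqextTA}.
\item If \(p\nmid|N:K|\), the theory of \cite{I73} applies to the coprime section inside \(N\), but not to \(G\), since \(|G:K|\) need not be coprime to \(p\). One instead uses the strong-section theorem of \cite{I82}, available because \(\cn[K/L](C)=1\), made compatible with the canonical character \(\Psi\) of \cite{I73}.
\end{enumerate}
That compatibility is Theorem~\ref{thm:strongchartripcomp}. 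Together with Lemma~\ref{lem:conmnorcar}, it gives triple isomorphisms over the complement \(H/L\) whose character bijection is \(\chi_U=\Psi\chi^*\). Galois equivariance then comes from \(\Psi\) being rational and nonvanishing. Appealing to ``canonicity'' of Isaacs' map is not enough: equivariance is needed for the bijection that the triple isomorphisms actually realize.

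Your derivation of the second bijection (Clifford correspondence plus the triple isomorphisms, giving \(\chi(1)/\chi'(1)=\theta(1)\)) agrees with the paper. So do the divisibility by \(|N:C|\) and \(k(G)\ge|\ce{A}|\).

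The equality case, however, rests on \(|C:C'|\le|N:N'|\), which is false in general. For \(N=S_4\) with \(C\) a Sylow \(2\)-subgroup, \(|C:C'|=4>2=|N:N'|\). The paper instead invokes \cite[Theorem~B]{N22}: \(\HD(N)=\IRR(N)\) forces \(N\) to be abelian.

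A direct fix is also possible. Head characters restrict irreducibly to \(K=N^\infty\), so \(\HD(N)=\IRR(N)\) makes every member of \(\IRR(K)\) \(N\)-invariant. By Brauer's permutation lemma, \(N\) then fixes every conjugacy class of \(K\). Hence \(K=[K,N]\subseteq K'\), so \(K=1\) by solvability. Therefore \(N=C\) and \(\IRR(N)=\HD(N)=\LIN(N)\), so \(N\) is abelian.
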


We notice that, in the case where \(N=G\), Theorem~\hyperref[thmA:A]{A} reproves the main result of \cite{N25}. 

Moreover, when $C$ is a self-normalizing Sylow \(p\)-subgroup of $N$, we show that  Theorem~\hyperref[thmA:A]{A} recovers a very special case of the main result of \cite{R23} without relying on the classification of finite simple groups.

\section{A Strong Character-Triple Isomorphism}

In his landmark paper \cite{I73} (see also \cite[Chapters~7,~8]{CSG}), I. M. Isaacs laid the foundation for the character theory of fully ramified abelian sections. Recall that if \(L\lhd K\) and \(\theta\in \IRR(K)\) lies over \(\varphi\in \IRR(L)\), the characters \(\theta\) and \(\varphi\) are said to be \bf{fully ramified} with respect to \(K/L\) if \(\theta_L = e\varphi\) with \(e^2 = |K:L|\). If, furthermore, \(K/L\) is a normal section of a group $G$ and either \(\theta\) or \(\varphi\) is $G$-invariant, then both of them are, and in this situation, we sometimes say that the section \(K/L\) is fully ramified. 

If \(K/L\) is a fully ramified abelian section of a group $G$, where either \(|K:L|\) or \(|G:K|\) is odd, Isaacs showed  \cite[Theorem~9.1]{I73} the existence of a complement \(H/L\) for \(K/L\) in \(G/L\), a character \(\Psi\) of \(H/L\), and a correspondence \(\IRR(G|\theta)\rightarrow \IRR(H|\varphi)\) defined by the equation \(\chi_H= \Psi \xi\) for \(\chi\in \IRR(G|\theta)\) and \(\xi\in \IRR(H|\varphi)\). 

\medskip

In his expository work \cite{I82}, Isaacs addressed the case where  \(|G:K|\) and \(|K:L|\) may both be even, under the additional condition that \(K/L\) is a \bf{strong section}. An abelian normal section \(K/L\) of a group $G$ is said to be strong if there exists \(N\lhd G\) such that the induced group of automorphisms, \(S=N/\cn[N](K/L)\), satisfies the following conditions. 
\begin{enumerate}
\item \(\cn[K/L](S)=1\). 
\item \(|S|\) and \(|K:L|\) are relatively prime. 
\item $S$ is solvable. 
\end{enumerate}

If \(K/L\) is a strong section of $G$ and \(\theta,\varphi\) are fully ramified with respect to \(K/L\) with \(\theta_L = e \varphi\), Theorem B of \cite{I82} guarantees the existence of a complement \(H/L\) for \(K/L\) in \(G/L\) and a correspondence \(\IRR(H|\varphi)\rightarrow \IRR(G|\theta)\) such that \(\chi(1)=e\xi(1)\) whenever \(\xi\in \IRR(H|\varphi)\) and \(\chi\in \IRR(G|\theta)\) correspond.

\medskip 

Furthermore, we observe from the proofs of \cite[Theorem~9.1]{I73} and \cite[Theorem~B]{I82} that both correspondences define strong character-triple isomorphisms between \((G,\) \(K, \theta)\) and \((H,L,\varphi)\),  where the underlying isomorphism is the natural map \(Kh\mapsto Lh\). (We refer the reader to \cite[Chapter~11]{CSG} and \cite[Chapter~5]{CTN} for the definition and main properties of character-triple isomorphisms. Note that while a character-triple isomorphism is a pair \((\tau,\sigma)\) consisting of a group isomorphism and a bijection on characters, we will slightly abuse notation and use \(\sigma\) to denote both maps. Also, we will write the action of \(\sigma\) exponentially, so that the image of an element $x$ and a character \(\xi\) under \(\sigma\) is \(x^\sigma\) and \(\xi^\sigma\), respectively.) Our aim in this section is to prove that both strong character-triple isomorphisms are compatible, in the sense that they can be chosen to agree in a situation where both are defined. More precisely, we establish the following.

\begin{thm} \label{thm:strongchartripcomp} Assume that \(K/L\) is a strong section of \(G\) and let  \(K\subset N\lhd G\) such that \(|K:L|\) and \(|N:K|\) are relatively prime. Let \(\theta\in \IRR(K)\) and \(\varphi\in \IRR(L)\) be fully ramified with respect to \(K/L\), where \(\varphi\) is $G$-invariant. Then there exist a complement \(H/L\) for \(K/L\) in \(G/L\), an $H$-invariant character \(\Psi\) of \(U/L=(N\cap H)/L\), and a strong character-triple isomorphism \[\sigma:(H,L,\varphi)\rightarrow (G,K,\theta)\] such that the following hold.
\begin{enumerate}
\item The associated isomorphism \(\sigma:H/L\rightarrow G/K\) is the natural map \(Lh\mapsto Kh\).
\item \(\Psi(u) = \pm \sqrt{|\cn[K/L](u)|}\) for all elements \(u\in U\). 
\item For  intermediate subgroups \(L\subset V\subset U\) and \(K\subset W =KV\subset N\), the associated bijection \(\sigma_V:\IRR(V|\varphi)\rightarrow \IRR(W|\theta):\xi\mapsto\xi^{\sigma_V}\) is defined by the equation \((\xi^{\sigma_V})_V = \Psi_V \xi\). 
\end{enumerate}
\end{thm}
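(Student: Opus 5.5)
We will combine the two constructions: Isaacs' odd-order/coprime theory \cite[Theorem~9.1]{I73} on the normal subgroup $N$, and the strong-section theory \cite[Theorem~B]{I82} on all of $G$. The idea is that inside $N$ the section $K/L$ is fully ramified with $|K:L|$ coprime to $|N:K|$. So the coprime machinery (the Glauberman-type correspondence underlying \cite[Chapter~13]{CTN}) gives a canonical character $\Psi$ on a complement. The strong-section hypothesis then lets us extend the correspondence from $N$ to $G$ compatibly with this $\Psi$.

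\textbf{Step 1: the complement.} Since $\gcd(|K:L|,|N:K|)=1$, Schur--Zassenhaus gives complements $U/L$ to $K/L$ in $N/L$, and they are all $K$-conjugate. A Frattini argument gives $G=K\nr(U)$. Now $K\cap\nr(U)$ normalizes $U$ and $K$, so $[K\cap \nr(U),U]\subseteq K\cap U=L$. Thus $(K\cap\nr(U))/L$ is centralized by $U/L$. For the strong section we may take the subgroup in its definition to be $N$; then $S$ is the image of $N$, equivalently of $U$. Hence $\cn[K/L](U)=1$, which forces $K\cap \nr(U)=L$. Set $H=\nr(U)$. Then $H/L$ complements $K/L$ in $G/L$, $U=N\cap H$, and (1) will hold once $\sigma$ is built on top of the natural isomorphism $H/L\cong G/K$.

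\textbf{Step 2: the character $\Psi$.} Since $\theta$ is $G$-invariant and $U/L$ acts coprimely on $K/L$, the standard theory of fully ramified coprime sections (\cite[Theorem~9.1]{I73} and its proof, or the character-five analysis) gives a unique character $\Psi$ of $U/L$ with two properties. First, $\theta$ extends to $KU=N$, and for $V$ between $L$ and $U$ the restriction of that extension to $V$ factors as $\Psi_V$ times an extension of $\varphi$. Second, $\Psi(u)=\pm\sqrt{|\cn[K/L](u)|}$ for every $u\in U$. Uniqueness makes $\Psi$ invariant under $H=\nr(U)$, because $H$ fixes $\theta$, $\varphi$ and $U$. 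This gives (2).

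\textbf{Step 3: the isomorphism.} For $L\subseteq V\subseteq U$ and $W=KV$, we define $\sigma_V$ by $(\xi^{\sigma_V})_V=\Psi_V\xi$. Well-definedness and bijectivity follow from the usual argument: $\Psi_V$ has values nonvanishing on $V$ by (2), and $W=KV$ with $\gcd(|K:L|,|V:L|)=1$. For intermediate groups $L\subseteq J\subseteq H$ not contained in $U$, we use the correspondence of \cite[Theorem~B]{I82}. The main obstacle is to show that this can be chosen to agree with $\sigma_V$ on subgroups of $U$. Isaacs' construction there passes through the character $\Psi$ of $U/L$ coming from the strong-section hypothesis; I expect its values to be forced to be $\pm\sqrt{|\cn[K/L](u)|}$ as well, and then uniqueness identifies it with our $\Psi$. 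If uniqueness only holds up to a linear character of $U/L$, the plan is to twist Isaacs' choice by a suitable $H$-invariant linear character so that it matches.

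\textbf{Step 4: verification.} It then remains to check the strong character-triple axioms: compatibility with restriction, with multiplication by characters of $H/L$, and with conjugation by $H$. These are inherited from the corresponding properties of both correspondences. The conjugation property uses the $H$-invariance of $\Psi$.
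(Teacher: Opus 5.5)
Your Step~3 is where the theorem actually lives, and what you offer there is an expectation, and it is false.

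\textbf{The main gap (Steps 3 and 4).} The correspondence of \cite[Theorem~B]{I82} is not canonical. The strong-section hypothesis is used there to produce $H$ and to show that $\theta$ extends to $K\rtimes(H/L)$. Equivalently, after the reduction to $L\subseteq\zn(G)$, it shows that $\hat{\theta}=\theta\times 1_{L^\tau}$ extends to $\Gamma=K\rtimes H^\tau$. The isomorphism is then built from an \emph{arbitrary} such extension $\chi$.

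Nothing forces $u\mapsto\chi(u^\tau)$ to be $\Psi$. Two choices of $\chi$ differ by a linear character of $H/L$. So the discrepancy with your $\Psi$-maps is an $H$-invariant $\lambda\in\LIN(U/L)$, and the ``twist'' you propose exists only if $\lambda$ extends to $H/L$. That is not automatic at primes dividing both $|U:L|$ and $|H:U|$, and you give no argument for it.

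Step~4 cannot glue two families of maps either. The restriction axiom for $V\subseteq J\cap U$ already forces Isaacs' map on $J$ to induce your map on $V$, and that is exactly the unproved compatibility.

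The missing idea is to \emph{choose} $\chi$:
\begin{enumerate}
\item In $\Gamma$, the character $\hat{\varphi}$ extends to $LH^\tau$ as $\zeta=\varphi\times 1_{H^\tau}$.
\item So \cite[Theorem~9.1]{I73} yields a unique $\eta\in\IRR(KU^\tau|\hat{\theta})$ with $\eta_{LU^\tau}=\Psi\zeta_{LU^\tau}$. It extends $\hat{\theta}$ and is $\Gamma$-invariant.
\item One shows $\eta$ extends to $\Gamma$ Sylow by Sylow, via \cite[Corollary~11.31]{CTFG}. For $p\nmid|U:L|$, write $\eta=\lambda\chi_{KU^\tau}$ and extend $\lambda$ by coprimeness. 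For $p\mid|U:L|$, the Sylow preimage $P$ has $|P:KL^\tau|$ prime to $|K:L|$. Then \cite[Theorem~9.1]{I73} applied to $P$ itself gives an extension that restricts to $\eta$, by uniqueness of canonical characters.
\item Condition (3) then follows from the formula $kv\mapsto\chi(kv^\tau)\xi(v)$ for the corresponding character, together with the vanishing in \cite[Theorem~9.1(d)]{I73}.
\end{enumerate}

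\textbf{Errors in Steps 1 and 2.} In Step~1 you take $N$ to be the normal subgroup in the definition of a strong section, but that is not a hypothesis. $N=K$ is allowed, and then $U=L$ and $\nr(U)=G$ is not a complement. Your Frattini argument is fine when $\cn[K/L](U)=1$ (compare Lemma~\ref{lem:conmnorcar}). In general, though, the complement must be the one produced in \cite{I82}.

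In Step~2, $\theta$ need not extend to $N$. Since $\chi(1)=e\,\xi(1)$ under $\chi_U=\Psi\xi$, $\theta$ extends to $N$ if and only if $\varphi$ extends to $U$, and this can fail. For example, take $G=E\rtimes Q_8$ with $E$ extraspecial of order $27$ and $Q_8$ acting through a quotient of order $2$ by an automorphism inverting $E/\zn(E)$. Set $L=\zn(E)\times\zn(Q_8)$, $K=EL$, $N=G$, and let $\varphi$ be faithful. Only the correspondence $\chi_V=\Psi_V\xi$ survives. This is exactly why the paper works in $\Gamma$, where $\hat{\varphi}$ does extend.
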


Before proving Theorem~\ref{thm:strongchartripcomp}, we check that its conclusions are invariant under strong character-triple isomorphisms. 

\begin{lem}\label{lem:tech}
Assume the hypotheses and notation of Theorem~\ref{thm:strongchartripcomp}.
Let \(\sigma:(G,L,\varphi)\rightarrow (G^\sigma,L^\sigma,\varphi^{\sigma_L})\) be a strong character-triple isomorphism. If  Theorem~\ref{thm:strongchartripcomp} holds for \(G^\sigma\), then it also holds for \(G\). 
\end{lem}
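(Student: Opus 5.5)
We transport everything across \(\sigma\), apply Theorem~\ref{thm:strongchartripcomp} in \(G^\sigma\), and pull the conclusions back. Write \(\bar{X}=X^\sigma\) for \(L\subseteq X\subseteq G\), so \(\bar{X}/\bar{L}\cong X/L\) via the isomorphism of \(\sigma\) and \(\bar{L}=L^\sigma\). Since this isomorphism preserves normality, indices and induced automorphism groups, \(\bar{K}/\bar{L}\) is a strong section of \(\bar{G}\) witnessed by \(\bar{N}\). Indeed, \(\cn[\bar N](\bar K/\bar L)\) corresponds to \(\cn[N](K/L)\), so \(S\) and the action on \(K/L\) are transported. Moreover \(|\bar K:\bar L|\) and \(|\bar N:\bar K|\) remain coprime, and \(\bar\varphi=\varphi^{\sigma_L}\) is \(\bar G\)-invariant. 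Put \(\bar\theta=\theta^{\sigma_K}\). It is fully ramified over \(\bar\varphi\) because \(\sigma\) preserves ratios of degrees and restrictions, giving \(\bar\theta_{\bar L}=e\bar\varphi\). So the hypotheses of Theorem~\ref{thm:strongchartripcomp} hold in \(\bar G\), and we obtain \(\bar H\), \(\bar\Psi\) on \(\bar U=\bar N\cap \bar H\), and \(\bar\rho:(\bar H,\bar L,\bar\varphi)\to(\bar G,\bar K,\bar\theta)\) satisfying (1)–(3).

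Next we pull back. Let \(H\) be the preimage of \(\bar H\), that is, \(H/L\) corresponds to \(\bar H/\bar L\); then \(H/L\) complements \(K/L\) and \(U=N\cap H\) corresponds to \(\bar U\). Define \(\rho=\sigma^{-1}\circ\bar\rho\circ\sigma\), with \(\sigma\) restricted to \((H,L,\varphi)\) and \(\sigma^{-1}\) on \((\bar G,\bar K,\bar\theta)\). Restrictions of strong isomorphisms to intermediate subgroups and compositions of strong isomorphisms are strong. The group map is \(Lh\mapsto \bar L\bar h\mapsto \bar K\bar h\mapsto Kh\), which gives (1). Define \(\Psi\) on \(U/L\) by \(\Psi(u)=\bar\Psi(\bar u)\) through \(U/L\cong\bar U/\bar L\). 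Then \(\cn[K/L](u)\) corresponds to \(\cn[\bar K/\bar L](\bar u)\), so (2) holds, and \(\Psi\) is \(H\)-invariant.

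The main obstacle is (3): for \(\xi\in\IRR(V|\varphi)\) we must check \((\xi^{\rho_V})_V=\Psi_V\xi\). Write \(\xi^{\rho_V}=(\eta)^{\sigma_W^{-1}}\) with \(\eta=(\xi^{\sigma_V})^{\bar\rho_{\bar V}}\), so \(\eta_{\bar V}=\bar\Psi_{\bar V}\xi^{\sigma_V}\). The key tool is the standard property of strong isomorphisms: multiplying by characters of \(X/L\) commutes with \(\sigma_X\), and \(\sigma\) commutes with restriction. This property is what makes \(\sigma\) strong, beyond merely commuting with restriction. Applying \(\sigma^{-1}\) and restricting to \(V\) gives \(((\eta)^{\sigma_W^{-1}})_V=(\eta_{\bar V})^{\sigma_V^{-1}}=(\bar\Psi_{\bar V}\xi^{\sigma_V})^{\sigma_V^{-1}}=\Psi_V\xi\). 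Here \(\bar\Psi_{\bar V}\) is a (possibly reducible, possibly virtual with signs) character of \(\bar V/\bar L\), pulled back to \(\Psi_V\). Because \(\bar\Psi\) may be virtual, we extend the commuting-with-multiplication property linearly, decomposing \(\bar\Psi_{\bar V}\) into irreducibles of \(\bar V/\bar L\). We also verify that the map on \(\IRR(\bar V|\bar\varphi)\) extends \(\mathbb{Z}\)-linearly. Both are routine from the definition in \cite[Chapter~11]{CSG}.
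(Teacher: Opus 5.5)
Your proposal is correct and is essentially the paper's proof. You transport the data along \(\sigma\), apply Theorem~\ref{thm:strongchartripcomp} in \(G^\sigma\), and pull back through the composite \(\rho\) (the paper's \(\nu=\sigma\mu\bar{\sigma}^{-1}\)). You then get (1) and (2) by transport, and obtain (3) from compatibility with restriction and with multiplication by characters of \(V/L\). The one point you leave implicit is the converse half of (3), namely that \(\chi_V=\Psi_V\xi\) forces \(\chi=\xi^{\rho_V}\). The paper checks this by moving back to \(G^\sigma\), but it also follows at once from your forward computation, the bijectivity of \(\rho_V\), and the fact that \(\Psi\) is nonvanishing by (2).
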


\begin{proof}
Let \(H^\sigma/L^\sigma =(H/L)^\sigma\) be a complement for \(K^\sigma/L^\sigma\) in \(G^\sigma/L^\sigma\) such that there exist an \(H^\sigma\)-invariant character \(\Psi\) of \(U^\sigma/L^\sigma = (N^\sigma\cap H^\sigma)/L^\sigma\), and a strong character-triple isomorphism
\[ \mu:(H^\sigma,L^\sigma,\varphi^{\sigma_L})\rightarrow (G^\sigma,K^\sigma,\theta^{\sigma_K})\]
satisfying the conclusions of Theorem~\ref{thm:strongchartripcomp}.  Let \(\bar{\sigma}:(G,K,\theta)\rightarrow (G^\sigma,K^\sigma,\theta^{\sigma_K})\) be the strong character-triple isomorphism induced by \(\sigma\), so that the underlying isomorphism is defined by \((Kg)^{\bar{\sigma}} = K^\sigma(Lg)^{\sigma}\) and the map on characters
\[\bar{\sigma}:\bigcup_{K\subset X\subset G} \te{Char}(X|\theta)\rightarrow \bigcup_{K^\sigma\subset X^\sigma\subset G^\sigma} \te{Char}(X^\sigma|\theta^{\sigma_K}),\]
where \(\te{Char}(X|\theta)\) is the set of characters \(\xi\) of $X$ such that \(\xi_K\) is a multiple of \(\theta\), 
is simply the restriction of \(\sigma\). 

Consider the composition
\[\nu:(H,L,\varphi)\stackrel{\sigma}{\rightarrow} (H^\sigma,L^\sigma,\varphi^{\sigma_L}) \stackrel{\mu}{\rightarrow} (G^\sigma,K^\sigma,\theta^{\sigma_K}) \stackrel{\bar{\sigma}^{-1}}{\rightarrow }(G,K,\theta),\]
which is itself a strong character-triple isomorphism by \cite[Problem~5.4]{CTN}. It is clear that the underlying isomorphism of \(\nu\) is the natural map \(Lh\mapsto Kh\). Defining \(\Psi^{\sigma^{-1}}\) as the character of \(U/L\) given by \(\Psi^{\sigma^{-1}} (Lu) = \Psi((Lu)^\sigma) \) for \(Lu\in U/L\), we see that it satisfies condition (2) of Theorem~\ref{thm:strongchartripcomp}. 

To check condition (3), let \(L\subset V\subset U\) and \(W=V^\nu   = KV\). We must prove that characters \(\xi\in \IRR(V|\varphi)\) and \(\chi\in \IRR(W|\theta)\) correspond under \(\nu\) if and only if \(\chi_V = (\Psi^{\sigma^{-1}})_V\xi\). 

Suppose first that \(\chi = \xi^{\nu_V} = \xi^{(\sigma\mu\bar{\sigma}^{-1})_V}\). We have
\[ (\xi^{(\sigma\mu\bar{\sigma}^{-1})_V})_V =((\xi^{\sigma_V})^{ \mu_{V^\sigma}})^{(\sigma^{-1}) _{ W^\sigma}})_V\, .\]
Writing
\[ \zeta =\xi^{\sigma_V}\in \IRR(V^\sigma|\varphi^{\sigma_L})\quad \te{and}\quad \eta = \zeta^{\mu_{V^\sigma}}\in \IRR(W^\sigma|\theta^{\sigma_K}),\]
we may apply \cite[Definition~11.23(c)]{CTFG} to obtain 
\[ (\eta^{ (\sigma^{-1})_{W^\sigma}})_V= (\eta_{V^\sigma})^{(\sigma^{-1}) _ {V^\sigma}}\, .\]
Since \(G^\sigma\) satisfies condition (3) of Theorem~\ref{thm:strongchartripcomp} by hypothesis, we have
\[ \eta_{V^\sigma} = \Psi_{V^\sigma} \zeta\, .\]
Applying \cite[Definition~11.23(c,d)]{CTFG}, 
\[ (\Psi_{V^\sigma} \zeta)^{(\sigma^{-1})_{V^\sigma}} = (\Psi_{V^\sigma})^{\sigma^{-1}} \zeta^{(\sigma^{-1})_{V^\sigma}} = (\Psi^{\sigma^{-1}})_V \xi\, .\]
Thus \(\chi_V = (\Psi^{\sigma^{-1}})_V \xi\), as required. 

Conversely, suppose that \(\chi_V = (\Psi^{\sigma^{-1}})_V \xi\). Since
\[ (\chi_V)^{\sigma_V} = (\chi^{\sigma_W})_{V^\sigma}\]
and
\[ ((\Psi^{\sigma^{-1}})_V \xi)^{ \sigma_V}  =((\Psi^{\sigma^{-1}})_V) ^{\sigma} \xi^{\sigma_V}  = \Psi_V \xi^{\sigma_V},\]
we have \((\chi^{\sigma_W})_{V^\sigma} = \Psi_V \xi^{\sigma_V}\). By Theorem~\ref{thm:strongchartripcomp}(3), \(\chi^{\sigma_W}\) is the image of \(\xi^{\sigma_V}\) under \(\mu_{V^\sigma}\). Applying \((\sigma^{-1})_{W^\sigma}\) to the equality \(\chi^{\sigma_W} = (\xi^{\sigma_V})^{\mu_{V^\sigma}}\) we obtain that
\[ \chi = (( \xi^{\sigma_V})^{\mu_{V^\tau}})^{( \sigma^{-1})_{W^\sigma}} = \xi^{\nu_V}\, .\]
This completes the proof. 
\end{proof}

\begin{proof}[Proof of Theorem~\ref{thm:strongchartripcomp}]
By Theorem 11.28 of \cite{CTFG} and Lemma~\ref{lem:tech}, it is no loss to assume that \(L\subset \zn(G)\). Applying \cite[Lemma~11.26]{CTFG} to the canonical projection \(G\rightarrow G/\ker(\varphi)\), we may also assume that \(\varphi\) is faithful. Then we are in the setting of the proof of \cite[Theorem~B]{I82} and on the hypotheses of \cite[Theorem~5.3]{I82}. 

Let \(H/L\) be the complement for \(K/L\) in \(G/L\) defined in the proof \cite[Theorem~B]{I82}. As in the proof of \cite[Theorem~5.3]{I82}, we fix an isomorphism \(\tau:H\rightarrow H^\tau\) and let \(H^\tau\) act on \(K\) via \(k^{h^\tau} = k^h\). Let \(\Gamma = K\rtimes H^\tau\) be the corresponding semidirect product and view \(K\) and \(H^\tau\) as subgroups of \(\Gamma\). Note that since \(L\subset \zn(G)\), we have \(KL^\tau =K\times L^\tau\) and \(LL^\tau = L\times L^\tau\). Thus \(\theta\) and \(\varphi\) extend to \(\hat{\theta} = \theta\times 1_{L^\tau}\in \IRR(KL^\tau)\) and \(\hat{\varphi} = \varphi\times 1_{L^\tau}\in \IRR(LL^\tau)\), respectively. As shown in the final paragraphs of the proof of \cite[Theorem~B]{I82},  \(\theta\) extends to \(G_0 = K\rtimes (H/L)\). Thus \(\hat{\theta}\) extends to \(\Gamma\), and we fix an extension \(\chi\in \IRR(\Gamma)\) of \(\hat{\theta}\). 

\begin{center}
\begin{tikzpicture}[scale =0.8]
 \begin{scope}[rotate=45,scale=1,transform shape,nodes={fill=white,transform
   shape=false}]
	\node (A11) at (0,0) {$1$};
	\node (A12) at (2,0) {$L^\tau$};
	\node (A14) at (6,0) {$H^\tau$};
	
	\node (A21) at (0,2) {$L\, \varphi$};
	\node (A22) at (2,2) {$LL^\tau\, \hat{\varphi}$};
	\node (A25) at (6,2) {$LH^\tau$};	
	\node (A31) at (0,4) {$K\, \theta$};
	\node (A32) at (2,4) {$KL^\tau\, \hat{\theta}$};
	\node (A35) at (6,4) {$\Gamma\, \chi$};
	
	\draw (A11) -- (A12) -- (A14);
	\draw (A21) -- (A22) -- (A25);
	\draw (A31) -- (A32) -- (A35);
	\draw (A11) -- (A21) -- (A31);
	\draw (A12) -- (A22) -- (A32);
	\draw (A14)  --(A25) -- (A35);
\end{scope}
\end{tikzpicture}
\end{center}

\medskip

\noindent \bf{Step 1.} \it{We construct a strong character-triple isomorphism \((H,L,\varphi)\rightarrow (G,K,\theta)\) depending on the extension \(\chi\in \IRR(\Gamma)\), whose associated isomorphism is the natural map \(Lh\mapsto Kh\).}

\medskip

The isomorphism \(\sigma:H/L\rightarrow \Gamma/KL^\tau\) given by \((Lh)^\sigma = (KL^\tau) h^\tau\) defines a strong character-triple isomorphism
\[ \sigma:(H,L,\varphi)\rightarrow (\Gamma,KL^\tau,\varphi^{\sigma_L})\, .\]
For \(L\subset V\subset H\) and \(\xi\in \IRR(V|\varphi)\), the corresponding character \(\xi^\sigma\in \IRR(KV^\tau|\varphi^{\sigma_L})\) is given by \(\xi^\sigma(kv^\tau)=\xi(v)\) for \(k\in K\) and \(v^\tau\in V^\tau\). 

Since \(\varphi^{\sigma_L} = 1_K\times \varphi^\tau\) and \(\hat{\theta} = \theta\times 1_{L^\tau}\), we see that \(\chi_{KL^\tau} \varphi^{\sigma_L} = \hat{\theta} \varphi^{\sigma_L}\) is an irreducible character of \(KL^\tau\). Then \cite[Lemma~11.27]{CTFG} defines a strong character-triple isomorphism
\[\mu:(\Gamma,KL^\tau,\varphi^{\sigma_L}) \rightarrow (\Gamma,KL^\tau,\hat{\theta}\varphi^{\sigma_L})\, .\]
where the associated isomorphism is the identity. Following the previous notation, the character corresponding to \(\xi\in \IRR(V|\varphi)\) under the composition of \(\sigma\) and \(\mu\) is \(\xi^{\sigma\mu} = \chi_{KV^\tau} \xi^\sigma\in \IRR(KV^\tau|\hat{\theta}\varphi^{\sigma_L})\). 

Next, we consider the epimorphism
\[\Gamma\rightarrow G:kh^\tau\mapsto kh\]
and observe that its kernel is the set \(\{l^{-1}l^\tau\,|\, l\in L\}\). Since \(\varphi\) is linear and \(\theta_L = e\varphi\), we have
\[ (\hat{\theta}\varphi^{\sigma_L})(l^{-1} l^\tau) = \hat{\theta}(l^{-1})\varphi(l) = (e\varphi(l)^{-1} ) \varphi(l) = e = (\hat{\theta} \varphi^{\sigma_L})(1),\]
and thus the kernel of the epimorphism is contained in \(\ker(\hat{\theta}\varphi^{\sigma_L})\). Applying again Lemma 11.26 of \cite{CTFG}, we obtain a strong character-triple isomorphism
\[\nu:(\Gamma,KL^\tau,\hat{\theta}\varphi^{\sigma_L})\rightarrow (G,K,(\hat{\theta}\varphi^{\sigma_L})^\nu)\, .\]
The associated isomorphism is  given by \(KL^\tau h^\tau \mapsto Kh\), and \((\hat{\theta}\varphi^{\sigma_L})^\nu\) is defined by 
\[(\hat{\theta}\varphi^{\sigma_L})^\nu(k) = (\hat{\theta}\varphi^{\sigma_L}) (k) = \hat{\theta}(k)=\theta(k)\, .\]
That is, \((\hat{\theta}\varphi^{\sigma_L})^\nu = \theta\). The composition
\[ \sigma \mu \nu : (H,L,\varphi)\stackrel{\sigma}{\rightarrow} (\Gamma,KL^\tau,\varphi^{\sigma_L}) \stackrel{\mu}{\rightarrow} (\Gamma,KL^\tau,\hat{\theta}\varphi^{\sigma_L}) \stackrel{\nu}{\rightarrow} (G,K,\theta)\]
is a strong character-triple isomorphism
with associated isomorphism
\[ Lh\mapsto (KL^\tau) h^\tau \mapsto (KL^\tau)h^\tau\mapsto Kh,\]
so it satisfies condition (1) of the theorem. For  \(L\subset V\subset H\) and \(\xi\in \IRR(V|\varphi)\), note that the corresponding character \(\xi^{\sigma\mu\nu}\in \IRR(KV|\theta)\) is given by \(\xi^{\sigma\mu\nu} (kv)=\chi(kv^\tau)\xi(v)\). 

\medskip

\noindent \bf{Step 2.} \it{We obtain an extension \(\chi\in \IRR(\Gamma)\) of \(\hat{\theta}\) such that the associated strong character-triple isomorphism of Step 1 satisfies conditions (2) and (3).}

\medskip

As in the statement of the theorem, write \(U=N\cap H\). We have that \(\hat{\varphi}\) is \(\Gamma\)-invariant and, since \(\hat{\theta}_{LL^\tau} = e\hat{\varphi}\) with \(e^2 = |K:L|=|KL^\tau:LL^\tau|\), the characters \(\hat{\theta}\) and \(\hat{\varphi}\) are fully ramified with respect to \(KL^\tau/LL^\tau\). Because \(|KU^\tau:KL^\tau| = |U:L|=|N:K|\) and \(|KL^\tau:LL^\tau|\) are relatively prime, we may apply \cite[Theorem~9.1]{I73} to \((KU^\tau,KL^\tau,LL^\tau,\hat{\theta},\hat{\varphi})\). By the conjugacy part of the Schur-Zassenhaus theorem, the conclusions of \cite[Theorem~9.1]{I73} hold for the complement \(LU^\tau/LL^\tau\) of \(KL^\tau/LL^\tau\) in \(KU^\tau/LL^\tau\). Let \(\Psi\) be the canonical character of \(LU^\tau/LL^\tau\) associated with \((KU^\tau,KL^\tau,LL^\tau,\hat{\theta},\hat{\varphi})\) as defined before \cite[Theorem~9.1]{I73}. Since \(\Psi\) is uniquely determined by the \(H^\tau\)-invariant tuple \((KU^\tau, KL^\tau, LL^\tau, \hat{\theta}, \hat{\varphi})\), it follows that \(\Psi\) is \(H^\tau\)-invariant. 

Let \(\zeta = \varphi\times 1_{H^\tau}\in \IRR(LH^\tau| \hat{\varphi})\). By \cite[Theorem~9.1]{I73}, let \(\eta\) be the unique member of \(\IRR(KU^\tau|\hat{\theta})\) such that \(\eta_{LU^\tau} = \Psi \zeta_{LU^\tau}\). By comparing degrees (recall that $\varphi$ is linear), we deduce that \(\eta\) extends \(\hat{\theta}\). Also, since \(\eta\) is uniquely determined by the \(H^\tau\)-invariant characters \(\Psi\) and \(\zeta_{LU^\tau}\), we see that \(\eta\) is invariant in \((KU^\tau) H^\tau = \Gamma\). We claim that \(\eta\) extends to \(\Gamma\). By \cite[Corollary~11.31]{CTFG}, it suffices to prove for every prime $p$ that \(\eta\) extends to  \(P\supset KU^\tau\), where \(P/KU^\tau\) is a Sylow $p$-subgroup of \(\Gamma/KU^\tau\). Because \(|KL^\tau:LL^\tau|=|K:L|\) and \(|KU^\tau:KL^\tau|=|N:K|\) are relatively prime, we distinguish two cases. 

\begin{center}
\begin{tikzpicture}[scale =0.8]
 \begin{scope}[rotate=45,scale=1,transform shape,nodes={fill=white,transform
   shape=false}]
	\node (A11) at (0,0) {$1$};
	\node (A12) at (2,0) {$L^\tau$};
	\node (A13) at (4,0) {$U^\tau$};
	\node (A14) at (6,0) {$H^\tau$};
	
	\node (A21) at (0,2) {$L\, \varphi$};
	\node (A22) at (2,2) {$LL^\tau\, \hat{\varphi}$};
	\node (A23) at (4,2) {$L U^\tau\, \Psi$};
	\node (A25) at (6,2) {$LH^\tau\, \zeta$};	
	\node (A31) at (0,4) {$K\, \theta$};
	\node (A32) at (2,4) {$KL^\tau\, \hat{\theta}, \varphi^{\sigma_L}$};
	\node (A33) at (4,4) {$KU^\tau\, \eta$};
	\node (A35) at (6,4) {$\Gamma\, \chi$};
	
	\draw (A11) -- (A12) -- (A13) -- (A14);
	\draw (A21) -- (A22) -- (A23)  -- (A25);
	\draw (A31) -- (A32) -- (A33)  -- (A35);
	\draw (A11) -- (A21) -- (A31);
	\draw (A12) -- (A22) -- (A32);
	\draw (A13) -- (A23) -- (A33);
	\draw (A14) -- (A25) -- (A35);
\end{scope}
\end{tikzpicture}
\end{center}

Suppose first that \(p\) does not divide \(|KU^\tau:KL^\tau|\). Since \(\eta\) and \(\chi_{KU^\tau}\) extend \(\hat{\theta}\), we apply the Gallagher correspondence \cite[Corollary~6.16]{CTFG} to write \(\eta = \lambda \chi_{KU^\tau}\) for some linear \(\lambda\in \IRR(KU^\tau/KL^\tau)\). Then \(\lambda\) is \(\Gamma\)-invariant because it is uniquely determined by the \(\Gamma\)-invariant characters \(\eta\) and \(\chi_{KU^\tau}\). Since \(|P:KU^\tau|\) and \(|KU^\tau:KL^\tau|\) are relatively prime in this case, \(\lambda\) extends to \(\hat{\lambda}\in \IRR(P/KL^\tau)\) by \cite[Corollary~6.28]{CTFG}. Thus \(\hat{\lambda}\chi_P\in \IRR(P)\) is an extension of \(\eta\), as required. 

 If \(p\) divides \(|KU^\tau:KL^\tau|\), then \(|KL^\tau:LL^\tau|\) and \(|P:KL^\tau|\) are relatively prime, and we apply  \cite[Theorem~9.1]{I73} to \((P,KL^\tau,LL^\tau,\hat{\theta},\hat{\varphi})\). Again by the conjugacy part of the Schur-Zassenhaus theorem, the conclusions of \cite[Theorem~9.1]{I73} hold for the complement \((P\cap LH^\tau)/LL^\tau\) for \(KL^\tau/LL^\tau\) in \(P/LL^\tau\). Letting \(\Psi_0\) be the canonical character of \((P\cap LH^\tau)/LL^\tau\) associated with \((P,KL^\tau,LL^\tau,\hat{\theta},\hat{\varphi})\), we must have \((\Psi_0)_{LU^\tau} = \Psi\) by its uniqueness. By \cite[Theorem~9.1]{I73}, let \(\hat{\eta}\) be the unique member of \(\IRR(P|\hat{\theta})\) such that \(\hat{\eta}_{P\cap LH^\tau} = \Psi_0 \zeta_{P\cap LH^\tau}\). By comparing degrees, we deduce that \(\hat{\eta}\) is an extension of \(\hat{\theta}\). In particular, \(\hat{\eta}_{KU^\tau}\in \IRR(KU^\tau|\hat{\theta})\). 
Since 
  \[ (\hat{\eta}_{KU^\tau})_{LU^\tau} = (\hat{\eta}_{P\cap LH^\tau})_{LU^\tau} = (\Psi_0 \zeta_{P\cap LH^\tau})_{LU^\tau} = (\Psi_0)_{LU^\tau} \zeta_{LU^\tau} = \Psi \zeta_{LU^\tau}\]
and \(\eta\) is the unique member of \(\IRR(KU^\tau|\hat{\theta})\) satisfying \(\eta_{LU^\tau} = \Psi \zeta_{LU^\tau}\), we have  \(\hat{\eta}_{KU^\tau} = \eta\), as wanted. We conclude that \(\eta\) extends to some \(\chi\in \IRR(\Gamma)\). 

\medskip

\noindent \bf{Step 3.} \it{We prove that the strong character-triple isomorphism from Step 1, associated with the extension \(\chi\in\IRR(\Gamma)\) from Step 2, satisfies conditions (2) and (3).}

\medskip

Let \(\Psi'\) be the character of \(U/L\) defined by \(\Psi'(Lu) = \Psi(LL^\tau u^\tau)\). Then it is $H$-invariant and, by \cite[Corollary~6.4]{I73} and the comment after its proof, it satisfies condition (2). 

It remains to check that, for \(L\subset V\subset U\) and \(\xi\in \IRR(V|\varphi)\), the corresponding character \(\xi^{\sigma\mu\nu}\in \IRR(KV|\theta)\) satisfies \((\xi^{\sigma\mu\nu})_V =(\Psi')_V \xi\), and that it is uniquely determined by this equality. For elements \(kv\in KV\), we know that \(\xi^{\sigma\mu\nu}(kv)=\chi(kv^\tau)\xi(v)\) by the last paragraph of Step 1. Applying \cite[Theorem~9.1(d)]{I73} to \((KV^\tau,KL^\tau, LL^\tau, \hat{\theta},\hat{\varphi})\), we have that \(\chi_{KV^\tau}\in \IRR(KV^\tau|\hat{\theta})\) vanishes on elements that are not \(KV^\tau\)-conjugate to elements of \(V^\tau\). Then \(\xi^{\sigma\mu\nu}\) vanishes on elements that are not \(KV\)-conjugate to elements of $V$ and, as a result, \(\xi^{\sigma\mu\nu}\) is determined by its restriction to $V$. Using that 
\[\chi_{LV^\tau} = (\chi_{KU^\tau})_{LV^\tau}=\eta_{LV^\tau} =(\eta_{LU^\tau})_{LV^\tau} =  (\Psi \zeta_{LU^\tau})_{LV^\tau} = \Psi_{LV^\tau} \zeta_{LV^\tau}\]
and \(\zeta = \varphi\times 1_{H^\tau}\), for elements \(v\in V\) we have
\[ \xi^{\sigma\mu\nu}(v)=\chi(v^\tau)\xi(v) = \Psi(v^\tau) \zeta(v^\tau) \xi(v) = \Psi'(v)\xi(v),\]
and thus \((\xi^{\sigma\mu\nu})_V = (\Psi')_V \xi\). Since \(\Psi'(v)=\pm \sqrt{|\cn[K/L](v)|}\neq 0\), we conclude that \(\xi^{\sigma\mu\nu}\) is uniquely determined by the equality \((\xi^{\sigma\mu\nu})_V = (\Psi')_V \xi\). This completes the proof of the theorem. 
\end{proof}

\section{The Inductive Step}

We recall that a Carter subgroup of a solvable group $N$ is a self-normalizing nilpotent subgroup $C$ of $N$, and any two of them are $N$-conjugate. Also, if \(L\) is a normal subgroup of $N$, then \(CL/L\) is a Carter subgroup of \(N/L\) (see \cite{C61}).

\medskip

To establish the inductive step for the proof of Theorem~\hyperref[thmA:A]{A}, we need \cite[Theorem~3.1]{I22}, which we include here for the reader's convenience. 

\begin{thm}\label{thm:3.1}
Let $C$ be a Carter subgroup of a solvable group $N$, and let \(K/L\) be an abelian normal section of $N$ such that \(N=KC\) and \(K\cap LC=L\). The following then hold.
\begin{enumerate}[(a)]
\item If \(\theta\in \IRR_C(K)\), then \(\theta\) lies over a unique member \(\theta'\) of \(\IRR_C(L)\). 
\item If \(\varphi\in \IRR_C(L)\), then \(\varphi\) lies under a unique member \(\tilde{\varphi}\) of \(\IRR_C(K)\). 
\item The maps \(':\IRR_C(K)\rightarrow \IRR_C(L)\) and \(\ \tilde{}:\IRR_C(L)\rightarrow \IRR_C(K)\) are inverse bijections. 
\item If \(\theta\in \IRR_C(K)\) lies over \(\varphi\in \IRR_C(L)\), then \(\theta\) extends to \(N\) if and only if \(\varphi\) extends to \(LC\). 
\end{enumerate}
\end{thm}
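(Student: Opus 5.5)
The plan is to reduce everything to two facts: the action of $C$ on $K/L$, and the conjugacy of Carter subgroups. First, since $CL/L$ is a Carter subgroup of $N/L$, it is self-normalizing there. Since $N=KC$ and $K\cap LC=L$, the group $LC/L$ is a complement to the abelian normal subgroup $K/L$, and its normalizer in $K/L$ is $\cn[K/L](C)$. Self-normalization therefore gives $\cn[K/L](C)=1$. I also want a second fact, which is the only Carter-theoretic input beyond that: if $C\subset H\subset N$, then $C$ is a Carter subgroup of $H$, so all $N$-conjugates of $C$ lying in $H$ are $H$-conjugate. This holds because a self-normalizing nilpotent subgroup of $N$ is self-normalizing in $H$.

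\emph{Proof of (a).} Let $\theta\in\IRR_C(K)$ and let $\Omega$ be the set of irreducible constituents of $\theta_L$. Then $\Omega$ is a single $K$-orbit and is $C$-stable, so $N=KC$ acts on $\Omega$ with $K$ transitive. For $\varphi\in\Omega$ this gives $N=KN_\varphi$.

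For uniqueness, suppose $\varphi$ and $\varphi^k$, with $k\in K$, are both $C$-invariant. Then $C$ and $C^{k^{-1}}$ both lie in $N_\varphi$. By the conjugacy fact, $C^{k^{-1}}=C^h$ for some $h\in N_\varphi$. Hence $hk\in\nr[N](C)=C$, so $k\in N_\varphi C$ and $\varphi^k=\varphi$.

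For existence I need some $N$-conjugate of $C$ inside $N_\varphi$; since $N=KN_\varphi$, that conjugate may be taken to be $C^k$ with $k\in K$, and then $\varphi^{k^{-1}}$ is $C$-invariant. I expect this to be the main obstacle. My first attempt is to take a Carter subgroup $D$ of $N_\varphi$ and show that $D$ is a Carter subgroup of $N$, using the covering-subgroup (projector) property of Carter subgroups for the nilpotent formation applied to the supplement $N_\varphi$ of $K$ modulo $L$. If that fails, the fallback is a counting argument. It would show that the numbers of $C$-invariant characters on both sides agree, using $\cn[K/L](C)=1$ and a Brauer-permutation-lemma argument applied to the Sylow factors of the nilpotent group $C$ acting on $\IRR(K/L)$. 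Combining this count with the injectivity of $\theta\mapsto\theta'$ coming from uniqueness in (b) then gives surjectivity.

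\emph{Proof of (b).} Let $\varphi\in\IRR_C(L)$ and consider $T=K_\varphi$, which is $C$-stable, and its characters over $\varphi$. Because $K/L$ is abelian and $C$ acts on it without nontrivial fixed points, the Gallagher-type parametrization of $\IRR(T|\varphi)$ by $\IRR(T/L)$ (or the fully ramified alternative) should leave exactly one $C$-invariant member $\psi$. Then $\tilde\varphi=\psi^K$ is the unique $C$-invariant character of $K$ over $\varphi$. Uniqueness again comes from the fact that $C$ fixes only the trivial element of $K/L$, and hence only the trivial linear character of $K/L$ (Brauer's lemma, applied prime by prime).

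\emph{Proofs of (c) and (d).} Part (c) is then formal, since $\theta$ lies over $\theta'$ and $\tilde\varphi$ lies over $\varphi$, and both maps are defined by uniqueness. For (d), note that $\theta$ is $N$-invariant and $\varphi$ is $LC$-invariant. If $\chi\in\IRR(N)$ extends $\theta$, I would restrict $\chi$ to $LC$ and use (a) for the subgroup to find an extension of $\varphi$. Conversely, an extension of $\varphi$ to $LC$ gives a character of $N_\varphi\supset LC$, which I would induce and then pass up to $N$ by Clifford correspondence and a degree comparison. Alternatively I would invoke the extendibility criterion \cite[Corollary~11.31]{CTFG} Sylow by Sylow, treating primes dividing $|K:L|$ and primes not dividing it separately, as in Step~2 of the proof of Theorem~\ref{thm:strongchartripcomp}.
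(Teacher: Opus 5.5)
The paper does not prove this statement itself. It quotes it from Isaacs: Lemma~2.1 of \cite{I22}, which amounts to your observation $\cn[K/L](C)=1$, and Theorem~3.1 of \cite{I22}. So you are in effect reproving Isaacs's theorem. Three parts of your proposal are correct: the preliminaries, the uniqueness half of (a), and, up to a detail, the uniqueness half of (b). The detail is that $C$ must fix no nontrivial element of $\IRR(Z/L)$, where $\IRR(T/Z)$ is the stabilizer in $\IRR(T/L)$ of a member of $\IRR(T|\varphi)$. Fixing no nontrivial character of $K/L$ is not enough on its own. The needed fact holds because, for nilpotent $C$, $\cn[Z/L](C)=1$ forces $[Z/L,C]=Z/L$: the $p'$-part of $C$ acts coprimely and fixed-point-freely on the $p$-part. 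Brauer's permutation lemma by itself only handles cyclic groups.

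The existence halves of (a) and (b) are not established.

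\textbf{Existence in (a).} Your first idea works, but it must be carried out. Let $D$ be a Carter subgroup of $N_\varphi$.
\begin{enumerate}
\item Since $N_\varphi/(N_\varphi\cap K)\cong N/K$ is nilpotent, the covering property gives $N=KD$.
\item Since $D$ is nilpotent and $\cn[K/L](D)=\cn[K/L](N)=1$, you get $D\cap K\subset L$.
\item Hence $\nr[K](D)\subset L\subset N_\varphi$, so $\nr[N](D)=D$ and $D$ is a Carter subgroup of $N$.
\item Writing $D=C^{kh}$ with $k\in K$ and $h\in N_\varphi$ gives $C^k\subset N_\varphi$, so $\varphi^{k^{-1}}$ is $C$-invariant.
\end{enumerate}
Your fallback is circular. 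It uses $\theta\mapsto\theta'$ before that map is defined. Equal counts together with two uniqueness statements do not force a bijection.

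\textbf{Existence in (b).} ``Should leave exactly one $C$-invariant member'' hides a real obstruction. $\IRR(T|\varphi)$ is a torsor for $\IRR(Z/L)$ with compatible $C$-action. A $C$-fixed point exists if and only if a class in $H^1(C,\IRR(Z/L))$ vanishes. Since $|C|$ and $|K:L|$ need not be coprime, Glauberman's lemma is unavailable. One fix is to rerun the argument for (a) in $\IRR(K/L)\rtimes C$ acting on $\IRR(K|\varphi)$. There $C$ is a Carter subgroup, because it fixes no nontrivial linear character of $K/L$.

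\textbf{Part (d).} This is the more serious gap, and it is essentially unproved. You would first need to reduce to $K/L$ having no proper nontrivial $C$-invariant subgroups. That reduction is possible: such subgroups $M$ are normal in $N$ and satisfy $K\cap MC=M$. You do not set it up. Going-Down then leaves three cases.
\begin{itemize}
\item When $\theta_L=\varphi$, restriction works.
\item When $\varphi^K=\theta$, Clifford theory works.
\item In the fully ramified case $\theta_L=e\varphi$, neither of your sketches works. Nothing you say forces $\chi_{LC}$ to have a constituent of degree $\varphi(1)$. Inducing an extension of $\varphi$ from $LC$ to $N=N_\varphi$ gives a character of degree $e^2\varphi(1)$, with no evident reason to contain an extension of $\theta$.
\end{itemize}
The Sylow-by-Sylow route of Step~2 in the proof of Theorem~\ref{thm:strongchartripcomp} only covers primes $q\nmid |K:L|$, because it rests on the coprime Theorem~9.1 of \cite{I73}. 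Now take a prime $p$ dividing both $|K:L|$ and $|N:K|$. The Sylow $p$-subgroup $P$ of $C$ centralizes the $C$-chief factor $K/L$. You still have to show that $\theta$ extends to $KP$ if and only if $\varphi$ extends to $LP$. This non-coprime, trivial-action case is where genuinely new input is needed; Isaacs develops TA-section machinery for it in \cite{I22}. Your proposal contains no argument for it.
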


\begin{proof}
It follows by Lemma 2.1 and Theorem 3.1 of \cite{I22}. 
\end{proof}

Before stating the next result, we introduce the following notation. If \(H\) is a subgroup of a finite group \(G\) and \(\ce{X}\subset \IRR(H)\), we let 
\[\XT(G|\ce{X})=\{\chi\in \IRR(G)\,|\, \chi_H\in \ce{X}\}\, .\]
If \(\ce{X}\) consists of a single member \(\theta\in \IRR(H)\), we write \(\XT(G|\ce{X})=\XT(G|\theta)\). In the case where \(\ce{X} = \IRR(H)\), we write \(\XT(G|\ce{X}) = \XT(G|H)\).

Recall that if a group $A$ acts on sets $X$ and $Y$, a map $f:X\rightarrow Y$ is $A$\bf{-equivariant} if $f(x^a)=f(x)^a$ for all \(x\in X\) and \(a\in A\). Certainly, compositions of $A$-equivariant maps are $A$-equivariant.

\begin{thm}\label{thm:indbij}
Let \(N\) be a normal solvable subgroup of a finite group $G$, let \(C\) be a Carter subgroup of $N$, and let \(K/L\) be an abelian normal section of $G$ such that \(N=KC\) and \(K\cap LC=L\). Then there exists a \((\nr(C)\times \GAL(\Q_{|G|}/\Q))\)-equivariant bijection 
\[{}^*:\XT(N|K)\rightarrow \XT(LC|L)\]
such that for every \(\chi\in \XT(N|K)\) the characters \(\theta = \chi_K\in \IRR_C(K)\) and \(\varphi =(\chi^*)_L\in \IRR_C(L)\) correspond under the canonical bijection of Theorem~\ref{thm:3.1}. Furthermore, the character-triples \((G_\chi,N,\chi)\) and \(((L\nr(C))_{\chi^*},LC,\chi^*)\) are isomorphic for every \(\chi\in \XT(N|K)\). 
\end{thm}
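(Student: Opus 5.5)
The plan is to build ${}^*$ one $\theta\in\IRR_C(K)$ at a time. For each such $\theta$ we take its partner $\varphi=\theta'\in\IRR_C(L)$ from Theorem~\ref{thm:3.1} and produce a strong character-triple isomorphism between $(G_\theta,K,\theta)$ and a triple over $(L\nr(C))_\varphi$. We then restrict this isomorphism to the intermediate groups $N$ and $LC$.

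\textbf{Setup.} First record the group theory. By the Frattini argument $G=N\nr(C)=K\nr(C)$. Since $\nr(C)\cap N=\nr[N](C)=C$, we get $\nr(C)\cap K\subset K\cap C\subset K\cap LC=L$. Hence $H=L\nr(C)$ satisfies $K\cap H=L$, so $H/L$ is a complement of $K/L$ in $G/L$, and $N\cap H=LC$. Every $\chi\in\XT(N|K)$ has $\theta=\chi_K$ invariant in $N=KC$, so $\theta\in\IRR_C(K)$. The canonical map $\theta\mapsto\theta'$ of Theorem~\ref{thm:3.1} is defined only in terms of $C$, $K$ and $L$, so it commutes with conjugation by $\nr(C)$ and with $\GAL(\Q_{|G|}/\Q)$. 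Therefore $G_\theta=K\nr(C)_\theta$ and $\nr(C)_\theta=\nr(C)_\varphi$, so $H_\varphi$ is a complement of $K/L$ in $G_\theta/L$. By Theorem~\ref{thm:3.1}(d), $\XT(N|\theta)\neq\emptyset$ if and only if $\XT(LC|\varphi)\neq\emptyset$.

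\textbf{Case split.} Next, split according to how $\theta$ restricts to $L$, using the dichotomy behind \cite[Lemma~2.1]{I22}; to make it clean I would first decompose $K/L$ into its primary components. If $\theta_L=\varphi$, then restriction $\chi\mapsto\chi_{LC}$ is a bijection $\XT(N|\theta)\to\XT(LC|\varphi)$, since $N/K\cong LC/L$. The triple isomorphism $(G_\theta,K,\theta)\cong(H_\varphi,L,\varphi)$ given by restriction to the complement is standard and is clearly equivariant.

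Otherwise $\theta$ and $\varphi$ are fully ramified with respect to $K/L$. Here I want to invoke Theorem~\ref{thm:strongchartripcomp} for $G_\theta$, with the same $N$ and $K$. This requires showing two things:
\begin{enumerate}
\item $K/L$ is a strong section. The natural witness is the nilpotent, hence solvable, group $S$ induced by $C$ (or by $N$).
\item $\gcd(|K:L|,|N:K|)=1$.
\end{enumerate}
Both should follow from $K\cap LC=L$, which yields $\cn[K/L](C)=1$ in the ramified situation, together with nilpotency of $C$. Write $C$ as the direct product of its Sylow subgroups. The Sylow subgroup of $C$ for the prime dividing $|K:L|$ centralizes a nontrivial part of the abelian $p$-group $K/L$. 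Coprimality then follows from the fixed-point-freeness. This reduction is the step I expect to be the main obstacle. If it fails, I would instead pass to the primary components of $K/L$ one at a time and induct.

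\textbf{Defining ${}^*$ in the ramified case.} Granting this, Theorem~\ref{thm:strongchartripcomp} gives a complement, which by Schur--Zassenhaus conjugacy we may take to be $H_\varphi/L$, an $H$-invariant $\Psi$ on $LC/L$, and $\sigma:(H_\varphi,L,\varphi)\to(G_\theta,K,\theta)$. Define $\chi^*$ by $\chi_{LC}=\Psi_{LC}\,\chi^*$, using part (3) with $V=LC$. This is a bijection $\XT(N|\theta)\to\XT(LC|\varphi)$ because $\chi(1)=e\,\chi^*(1)$. It is $\nr(C)$-equivariant because $\Psi$ is canonical, hence invariant. For $\gamma\in\GAL(\Q_{|G|}/\Q)$, canonicity gives that the character attached to $(\theta^\gamma,\varphi^\gamma)$ is $\Psi^\gamma$, so $(\chi^\gamma)^*=(\chi^*)^\gamma$.

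\textbf{Triple isomorphism.} Finally, since $\sigma$ is a strong character-triple isomorphism whose underlying map is $Lh\mapsto Kh$, it carries $LC/L$ to $N/K$. It therefore restricts to a character-triple isomorphism $((L\nr(C))_{\chi^*},LC,\chi^*)\to(G_\chi,N,\chi)$, by the standard fact that character-triple isomorphisms induce isomorphisms of ``subtriples'' over corresponding characters (\cite[Chapter~5]{CTN}). Assembling over $\nr(C)\times\GAL$-orbit representatives of $\theta$ completes ${}^*$.
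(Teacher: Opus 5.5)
\textbf{Your case split is incomplete.} You assume that either $\theta_L=\varphi$ or $\theta,\varphi$ are fully ramified with respect to $K/L$. Nothing forces this: $\varphi$ need not be $K$-invariant, and when $K/L$ is not a chief factor the behaviour can be mixed.

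For example, let $E=3^{1+2}_+$ and $Z=\zn(E)$. Let $t$ be an involution inverting $E/Z$ and centralizing $Z$. Take $G=N=E\rtimes\langle t\rangle$, $C=Z\times\langle t\rangle$ (a Carter subgroup), $K=E$, and $Z\subset L$ with $|L|=9$. Then $K/L$ is a chief factor and $K\cap LC=L$. Yet any $\varphi\in\IRR_C(L)$ that is nontrivial on $Z$ is not $K$-invariant, and $\theta=\varphi^K\in\IRR_C(K)$. Now take instead $K=E\times A$ with $|A|=3$ inverted by $t$, and $L=Z$. Then $\theta=\theta_0\times 1_A$ (with $\theta_0(1)=3$) satisfies $\theta_L=3\varphi$ while $|K:L|=27$. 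In both examples $K/L$ is a $3$-group, so splitting into primary components does not help.

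What you need is the paper's reduction. Induct on $|G:L|$. If $L<M<K$ with $M\lhd G$, use Theorem~\ref{thm:3.1} to get the intermediate $\eta\in\IRR_C(M)$ and compose the bijections for $(K,M)$ and $(M,L)$. Once $K/L$ is a chief factor, Going-Down gives three cases. The induced case $\theta=\varphi^K$ is handled by showing that induction is a bijection $\XT(U|\varphi)\to\XT(N|\theta)$, where $U=LC$; this uses the Clifford correspondence plus a Gallagher count, and yields the triple isomorphisms.

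\textbf{The coprimality step, which you flagged, does fail, and this is the essential missing idea.} $\cn[K/L](C_p)\neq 1$ does not contradict $\cn[K/L](C)=1$, because the $p'$-part of $C$ can act fixed-point-freely on $\cn[K/L](C_p)$. Take $N=(E\rtimes\langle t\rangle)\times\langle c\rangle$ with $c$ of order $3$, $C=Z\times\langle t\rangle\times\langle c\rangle$, $K=E$ and $L=Z$. Each $\theta\in\IRR(E)$ of degree $3$ is $C$-invariant and fully ramified over $L$, but $|K:L|=9$ and $|N:K|=6$. Inside $G=(E\rtimes \mathrm{SL}(2,3))\times\langle c\rangle$ with $t=-I$, $K/L$ is even a chief factor of $G$. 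Here Theorem~\ref{thm:strongchartripcomp} is unavailable, and your fallback changes nothing because $K/L$ is already a $p$-group.

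The paper instead sets $P/L=\op_p(\zn(U/L))\neq 1$. Since $K/L$ is a chief factor and $KP/L$ is a $p$-group, $P$ centralizes $K/L$. Hence $Q=KP\lhd G$ and $Q\cap PC=P$. Theorem~\ref{thm:indbij} for $(G,N,Q,P)$, which has smaller index $|G:P|$, gives an equivariant bijection $\XT(N|Q)\to\XT(U|P)$ with isomorphic triples. Lemma~\ref{lem:eqextTA} then shows that it carries $\XT(N|\theta)$ onto $\XT(U|\varphi)$. This is why Theorems~\ref{thm:indbij} and~\ref{thm:indbijorbit} are proved by a joint induction.

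A smaller point: even in the coprime case, Schur--Zassenhaus does not give conjugacy of complements in $G_\theta/L$, since only $|N:K|$ is coprime to $|K:L|$. Use the argument of Lemma~\ref{lem:conmnorcar} instead: conjugate inside $N/L$, then use $\nr(U)=H$.

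Your extension case matches the paper. So does the rest of your coprime ramified case: defining $\chi^*$ by $\chi_U=\Psi_U\chi^*$, Galois equivariance via the rational canonical $\Psi$, and restricting the triple isomorphism.
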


We will prove Theorem~\ref{thm:indbij} and Theorem~\ref{thm:indbijorbit} by induction on \(|G:L|\). For convenience, if \((G,N,K,L)\) is a tuple as in Theorem~\ref{thm:indbij}, we say that \(r=|G:L|\) is the relevant index. We also write \(\ce{G}=\GAL(\Q_{|G|}/\Q)\), \(U=LC\) and \(H=L\nr(C)\).

\begin{thm}\label{thm:indbijorbit}
Assume the hypotheses and notation of Theorem~\ref{thm:indbij}. Let \(\theta\in \IRR_H(K)\) and \(\varphi\in \IRR_H(L)\) extend to $N$ and $U$, respectively, where \(\varphi\) lies under \(\theta\). Let \(\ce{X}\) and \(\ce{Y}\) be the \(\ce{G}\)-orbits of \(\theta\) and \(\varphi\) in \(\IRR_C(K)\) and \(\IRR_C(L)\), respectively. Assume that Theorem~\ref{thm:indbij} holds whenever the relevant index satisfies \(r\ls |G:L|\). Then there exists an \((H\times \ce{G})\)-equivariant bijection 
\[{}^*:\XT(N|\X)\rightarrow \XT(U|\Y)\]
such that the character-triples \((G_\chi,N,\chi)\) and \((H_{\chi^*},U,\chi^*)\) are isomorphic for every \(\chi\in \XT(N|\X)\). 
\end{thm}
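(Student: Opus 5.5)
The plan is to reduce to a single pair $(\theta,\varphi)$, make the bijection for that pair, and spread it over the Galois orbits. Since $N=KC$ and $C$ is a Carter subgroup of $N\lhd G$, the Frattini argument gives $G=N\nr(C)=KH$. Hence the $H$-invariant characters $\theta$ and $\varphi$ have $G$-stabilizers $KH=G$ and $G_\varphi=(K_\varphi)H$, respectively. Let $\ce{G}_\theta$ and $\ce{G}_\varphi$ be their stabilizers in $\ce{G}$. By Theorem~\ref{thm:3.1}, $\varphi$ is the unique $C$-invariant constituent of $\theta_L$, and Galois action commutes with restriction; so $\ce{G}_\theta=\ce{G}_\varphi$ and $\theta^\tau\mapsto\varphi^\tau$ is a well-defined bijection $\X\to\Y$. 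It therefore suffices to build an $(H\times\ce{G}_\theta)$-equivariant bijection $\XT(N|\theta)\to\XT(U|\varphi)$ with isomorphic character triples. We then transport it to every $\theta^\tau$ by applying $\tau$. Character-triple isomorphisms are preserved under Galois conjugation, so this yields the required $(H\times\ce{G})$-equivariant map on $\XT(N|\X)$.

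Let $T=K_\varphi$. As $K/L$ is abelian and $T/L$ is normalized by both $K$ and $H\subseteq G_\varphi$, we get $T\lhd G$. I distinguish three cases.

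\emph{Case $T<K$.} By Clifford theory, $\theta=\psi^K$ for a unique $\psi\in\IRR(T|\varphi)$, and $\psi$ is $H$-invariant and $\ce{G}_\theta$-invariant. Restriction to $TC$ and $TH$ gives canonical, hence equivariant, bijections $\XT(N|\theta)\to\XT(TC|\psi)$, and Clifford correspondence gives character-triple isomorphisms $(G,N,\chi)\cong(TH,TC,\chi_{TC})$. The tuple $(TH,TC,T,L)$ satisfies the hypotheses of Theorem~\ref{thm:indbij}, with $T\cap LC=L$, and has relevant index $|TH:L|<|G:L|$. So Theorem~\ref{thm:indbij} applies; since its normalizer of $C$ is $L\nr[TH](C)=H$, composing finishes this case.

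\emph{Case $T=K$.} Here $\varphi$ is $G$-invariant. Choose $L\subseteq Z\subseteq K$ with $Z\lhd G$ such that $\theta_Z$ is a multiple of an extension $\hat\varphi$ of $\varphi$ that is fully ramified with respect to $K/Z$; this is possible by the standard analysis of abelian sections. If $L<Z<K$, apply the hypothesis to $(G,N,K,Z)$, whose relevant index is smaller, and then Theorem~\ref{thm:indbij} to the tuple obtained from $Z/L$ inside $ZH$. Compose the two maps, using $L\nr[ZH](C)=H$ and the uniqueness statements of Theorem~\ref{thm:3.1} to check that the intermediate characters match.

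If $Z=K$, then $\theta$ extends $\varphi$. I would take a canonical extension of $\theta$ to $N$ and of $\varphi$ to $U$, for instance the unique ones with determinantal order prescribed as in \cite[Corollary~6.28]{CTFG} wherever coprimality allows. These are $H\times\ce{G}_\theta$-invariant, and they exist by Theorem~\ref{thm:3.1}(d). Gallagher's correspondence, together with the natural isomorphism $N/K\cong U/L$, then gives the bijection. The character-triple isomorphism comes from \cite[Lemma~11.26, Lemma~11.27]{CTFG}.

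If $Z=L$, the section $K/L$ is fully ramified. I would verify that it is a strong section of $G$ with $|K:L|$ coprime to $|N:K|$; this comes from $C_{K/L}(C)=1$, which is forced by $K\cap LC=L$ and the Carter property. Then Theorem~\ref{thm:strongchartripcomp} gives a strong isomorphism $(H,L,\varphi)\to(G,K,\theta)$ whose $U$-level bijection is defined by $\chi_U=\Psi\chi^*$, and the natural map $Lh\mapsto Kh$ makes it $H$-equivariant.

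The main obstacle is the Galois equivariance in these last two subcases. In the fully ramified subcase, $\Psi(u)=\pm\sqrt{|\cn[K/L](u)|}$ is rational up to sign, and I expect it to be rational-valued. Then $\chi_U=\Psi\chi^*$ commutes with every $\tau\in\ce{G}_\theta$, because $\Psi^\tau=\Psi$ and the equation determines $\chi^*$ uniquely. I would establish this first, by checking that the sign is determined canonically by the $\ce{G}_\theta$-invariant data, as in the uniqueness argument of Step~2 of the proof of Theorem~\ref{thm:strongchartripcomp}. In the Gallagher subcase, the difficulty is choosing extensions fixed by $\ce{G}_\theta$. If no canonical extension is available, I would instead fix an $(H\times\ce{G}_\theta)$-transversal and define the map orbitwise, checking that the stabilizers on both sides agree through the isomorphism $N/K\cong U/L$.
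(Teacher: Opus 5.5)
\textbf{The gap: the non-coprime fully ramified case.} In your subcase $Z=L$ you claim that $|K:L|$ is coprime to $|N:K|$ because $\cn[K/L](C)=1$. This is false. A fixed-point-free action of the nilpotent group $C$ does not force coprimality, since a central $p$-part of $C$ can act trivially on $K/L$.

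A counterexample: take $G=N=\mathrm{SL}(2,3)\times \mathrm{C}_2$. Then:
\begin{itemize}
\item $C\cong \mathrm{C}_6\times\mathrm{C}_2$, $K=N^\infty=Q_8$, $L=\zn(Q_8)$ and $U=LC=C=H$;
\item $K\cap LC=L$, $K/L$ is a chief factor of $G$, and $\cn[K/L](C)=1$;
\item the degree-$2$ character $\theta$ of $Q_8$ is fully ramified over the faithful $\varphi\in\IRR(L)$, $\theta$ extends to $N$, and $\varphi$ extends to $U$;
\item yet $|K:L|=4$ and $|N:K|=6$.
\end{itemize}
Here Theorem~\ref{thm:strongchartripcomp} is unavailable: its hypothesis fails, and its proof applies Isaacs' coprime Theorem~9.1 to $KU^\tau$. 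So your proposal produces no bijection in this case.

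The paper supplies the missing idea. After first reducing to the case where $K/L$ is a chief factor of $G$, hence a $p$-group, suppose $p$ divides $|N:K|$ and put $P/L=\op_p(\zn(U/L))\neq 1$.
\begin{itemize}
\item Since $KP/L$ is a $p$-group, $\cn[K/L](P)$ is nontrivial. It is normalized by $K$ and $H$, so it is a $G$-invariant subgroup of the chief factor $K/L$, and therefore equals $K/L$.
\item Consequently $P\lhd G$, $Q=KP\lhd G$ and $Q\cap PC=P$.
\item Theorem~\ref{thm:indbij} applied to $(G,N,Q,P)$, which has smaller relevant index, gives an equivariant bijection $\XT(N|Q)\to\XT(U|P)$.
\item Lemma~\ref{lem:eqextTA}, which rests on Isaacs' FPF/TA-section results, shows that this bijection carries $\XT(N|\X)$ onto $\XT(U|\Y)$.
\end{itemize}
Your decomposition via $T=K_\varphi$ and the radical $Z$ never produces a chief factor. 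So you would also have to insert the reduction over intermediate normal subgroups $L<M<K$ (two applications of Theorem~\ref{thm:indbij}) before this argument can run. Even in the coprime case, you need Lemma~\ref{lem:conmnorcar} to know that the complement produced by Theorem~\ref{thm:strongchartripcomp} may be taken to be $H/L$.

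\textbf{The subcase $\theta_L=\varphi$ (your $Z=K$).} The canonical-extension route is not a proof. $(H\times\ce{G}_\theta)$-invariant extensions are not guaranteed once coprimality fails. Your orbitwise fallback does not explain why stabilizers or character triples match. Even with invariant extensions $\hat\theta,\hat\varphi$, the Gallagher matching differs from restriction by the invariant linear $\lambda$ with $\hat\theta_U=\lambda\hat\varphi$. Multiplying by $\lambda$ can change the isomorphism type of the triple when $\lambda$ does not extend to $H_{\chi^*}$.

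Use restriction instead, as the paper does. Since $N=KU$ and $K\cap U=L$, the map $\chi\mapsto\chi_U$ is a bijection $\XT(N|\theta)\to\XT(U|\varphi)$. It is visibly $(H\times\ce{G})$-equivariant, and it is a character-triple isomorphism because $G=NH$ and $N\cap H=U$.

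\textbf{The case $T<K$.} The correct map is not restriction to $TC$, since $\chi_{TC}$ is reducible. It is the Clifford correspondence $\chi=\eta^N$ with $\eta\in\XT(TC|\psi)$, giving $(G_\chi,N,\chi)\cong((TH)_\eta,TC,\eta)$. With that correction, this case and your use of $Z$ with Theorem~\ref{thm:indbij} are a legitimate alternative to the paper's chief-factor reduction and Going-Down trichotomy.
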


In the case where \(\theta\) and \(\varphi\) are fully ramified with respect to \(K/L\), we will use our strong character-triple isomorphism of Theorem~\ref{thm:strongchartripcomp} to prove Theorem~\ref{thm:indbijorbit}. To this end, we establish the following lemma. Its proof relies on the fact that, if a solvable group $G$ has an abelian nilpotent residual \(K=G^\infty\), then the Carter subgroups of $G$ are precisely the complements for $K$ in $G$ \cite[Lemma~2.1]{N25}.

\begin{lem}\label{lem:conmnorcar}
In the hypotheses of Theorem~\ref{thm:indbij}, the complements for \(K/L\) in \(G/L\) are $G$-conjugate to \(H/L\). 
\end{lem}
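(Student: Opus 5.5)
The plan is to first check that \(H/L=L\nr(C)/L\) is itself a complement for \(K/L\) in \(G/L\). Then, given an arbitrary complement \(X/L\), I would conjugate it so that \(X\cap N=LC\), and use a Frattini argument to force \(X\subset H\). Orders then give \(X=H\).

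\emph{Step 1: \(H/L\) is a complement.} By the Frattini argument, and since the Carter subgroups of \(N\) are \(N\)-conjugate, \(G=N\nr(C)=KC\nr(C)=K\nr(C)\), so \(G=KH\). For \(K\cap H\), take \(k=ln\in K\) with \(l\in L\) and \(n\in\nr(C)\). Then \(n=l^{-1}k\in K\subset N\), so \(n\in\nr[N](C)=C\), since \(C\) is self-normalizing in \(N\). Hence \(n\in K\cap C\subset K\cap LC=L\), and so \(K\cap H=L\).

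\emph{Step 2: reduction to \(N\).} Let \(X/L\) be any complement for \(K/L\) in \(G/L\). By Dedekind's law, \(N=N\cap KX=K(X\cap N)\) and \(K\cap (X\cap N)=L\). So \((X\cap N)/L\) is a complement for \(K/L\) in \(N/L\), as is \(LC/L\).

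Next I identify the nilpotent residual \(R\) of \(N/L\). Since \(N/K\cong C/(C\cap K)\) is nilpotent, \(R\subset K/L\), so \(R\) is abelian. By \cite[Lemma~2.1]{N25}, the Carter subgroups of \(N/L\) are exactly the complements for \(R\). Now \(LC/L\) is a Carter subgroup of \(N/L\), so it complements \(R\), and therefore \(|R|=|N:LC|=|K:L|\). This gives \(R=K/L\). Applying the same lemma in the other direction, \((X\cap N)/L\) is also a Carter subgroup of \(N/L\), hence \(N\)-conjugate to \(LC/L\). Replacing \(X\) by a \(G\)-conjugate, I may therefore assume \(X\cap N=LC\).

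\emph{Step 3: Frattini argument on \(LC\).} Since \(X\) normalizes \(X\cap N=LC\), we have \(X\subset \nr(LC)\). The subgroup \(C\) is a Carter subgroup of the solvable group \(LC\), because it is nilpotent and self-normalizing already in \(N\supset LC\). The Carter subgroups of \(LC\) are \(LC\)-conjugate, so the Frattini argument gives
\[\nr(LC)=LC\,\nr[\nr(LC)](C)\subset L\nr(C)=H.\]
Thus \(X\subset H\), and since both are complements for \(K/L\), \(|X|=|H|\) and \(X=H\).

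The only delicate point is Step 2, namely identifying \(K/L\) with the nilpotent residual of \(N/L\) so that \cite[Lemma~2.1]{N25} applies. The order comparison above handles it.
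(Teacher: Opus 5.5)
Your proof is correct and takes essentially the same route as the paper. You show that \(H/L\) is a complement via the Frattini argument, identify the complements of \(K/L\) in \(N/L\) with the Carter subgroups of \(N/L\) via the lemma on abelian nilpotent residuals, conjugate so that \(X\cap N=LC\), and conclude from \(X\subseteq \mathbf{N}_G(LC)\subseteq H\) and equality of orders. You also justify two points the paper calls ``clear'': the order argument showing that \(K/L\) is the nilpotent residual of \(N/L\), and the Frattini argument inside \(LC\) giving \(\mathbf{N}_G(LC)\subseteq H\).
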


\begin{proof}
 By the Frattini argument, we have 
\[G=N\nr(C)=(KC)\nr(C)=K\nr(C)=(KL)\nr(C) = KH\]
and, by Dedekind's lemma,
\[K\cap H=L(K\cap \nr(C)) = L(K\cap C) = K\cap LC=L\, .  \]
Hence, \(H/L\)  complements \(K/L\) in \(G/L\).

Next, observe that \(LC/L\) is a Carter subgroup of \(N/L\) and it is complemented by \(K/L\). Then it is clear that \(H=L\nr(C)\) is the normalizer of \(U=LC\) in $G$ and \(K/L\) is the nilpotent residual \((N/L)^\infty\) of \(N/L\). By  \cite[Lemma~2.1]{N25}, the complements for \(K/L\) in \(N/L\) are the Carter subgroups of \(N/L\).

Now, let \(H_0/L\) be a complement for \(K/L\) in \(G/L\). Then \((H_0\cap N)/L\) is a complement for  \(K/L\) in \(N/L\) and, by the preceding argument, it is a Carter subgroup of \(N/L\). Replacing \(H_0\) by an $N$-conjugate if necessary, we may assume that \(H_0\cap N=LC=U\). Thus \(U=H_0\cap N\lhd H_0\subset \nr(U)=H\). We conclude that \(H_0=H\) and the result follows. 
\end{proof}

To apply Theorem~\ref{thm:strongchartripcomp}, it is also required that \(|K:L|\) and \(|N:K|\) be relatively prime. To handle the non-coprime case, we apply  Theorem~\ref{thm:indbij} to a tuple \((G,N,Q,P)\) with relevant index \(r=|G:P|\ls |G:L|\) together with the following lemma.

\begin{lem}\label{lem:eqextTA}
Let $C$ be a Carter subgroup of a solvable group $N$. Let \(L\lhd K\) be $C$-invariant subgroups of $N$, where \(K/L\) is an abelian \(p\)-group that has no proper $C$-invariant subgroups, and \(K\cap LC=L\). Let \(L\lhd P\subset LC\) be $C$-invariant such that \(P/L\) is an abelian $p$-group, and write \(Q=KP\). Let \(\theta\in \IRR(K)\) lie over \(\varphi\in \IRR(L)\), and let \(\alpha\in \IRR(Q)\) lie over \(\beta\in \IRR(P)\). Suppose that \(\theta\) and \(\alpha\) extend to \(KC=QC\), and that \(\varphi\) and \(\beta\) extend to \(LC=PC\). Then \(\alpha_K = \theta\) if and only if \(\beta_L =\varphi\). 
\end{lem}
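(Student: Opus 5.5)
The plan is to reduce both directions of the equivalence to one pair of ``base'' characters, $\alpha_1\in\IRR(Q)$ and $\beta_1\in\IRR(P)$, and then move between characters by multiplying with linear characters of the $p$-group $Q/K\cong P/L$ (Gallagher). The step that needs real work, and that I have not yet settled, is showing that $\beta_1$ lies over $\varphi$ with the right degree.

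First I record the structural facts. By Dedekind's lemma and $P\subset LC$ we get $Q\cap PC=Q\cap LC=P(K\cap LC)=P$. Also $Q=KP$ is normalized by $K$ and by $C$, and $Q/P\cong K/(K\cap P)=K/L$. So Theorem~\ref{thm:3.1} applies twice inside the solvable group $KC=QC$ with Carter subgroup $C$: once to the section $K/L$ and once to $Q/P$. This gives canonical bijections $\IRR_C(K)\leftrightarrow\IRR_C(L)$ and $\IRR_C(Q)\leftrightarrow\IRR_C(P)$, each defined by ``lies over''. Because $\theta,\alpha$ extend to $KC$ and $\varphi,\beta$ extend to $LC$, all four characters are $C$-invariant. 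Hence $\varphi$ is the correspondent of $\theta$ and $\beta$ is the correspondent of $\alpha$. Since $K/L$ is an abelian chief-type section and both $\theta,\varphi$ are invariant, either $\theta_L=\varphi$ or $\theta,\varphi$ are fully ramified with respect to $K/L$. The same dichotomy holds for $\alpha,\beta$ with respect to $Q/P$.

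Next I construct the base pair. Fix an extension $\hat\theta\in\IRR(KC)$ of $\theta$ and set $\alpha_1=\hat\theta_Q$, a $C$-invariant extension of $\theta$ to $Q$. Let $\beta_1\in\IRR_C(P)$ be its correspondent under Theorem~\ref{thm:3.1}. I then need to show that $\beta_1_L=\varphi$.
\begin{itemize}
\item Every constituent of $(\alpha_1)_L=\theta_L$ is $\varphi$, so $\beta_1$ lies over $\varphi$.
\item Since $\varphi$ extends to $P$ and $P/L$ is abelian, Gallagher shows that $\beta_1_L=\varphi$ amounts to $\beta_1(1)=\varphi(1)$.
\item If $\theta_L=\varphi$, this follows because $\beta_1(1)\le\alpha_1(1)=\varphi(1)$.
\item In the fully ramified case I must exclude $\beta_1(1)=\alpha_1(1)=e\varphi(1)$. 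Here I would first try Theorem~\ref{thm:3.1}(d) applied to $Q/P$: $\alpha_1$ extends to $QC$, so $\beta_1$ extends to $PC$. I would then combine this with a degree count. By Gallagher, $\IRR(P|\varphi)$ consists of the $|P:L|$ linear twists of a fixed extension of $\varphi$, all of degree $\varphi(1)$. So any $\beta_1$ over $\varphi$ with $\beta_1(1)=e\varphi(1)$ would have to be a non-extension constituent, and I need to show such a constituent cannot be $C$-invariant and extendible.
\end{itemize}
If the degree count does not close the argument, the fallback is a direct computation of $(\hat\theta_{PC})$ restricted to $P$, using that $\hat\theta_{LC}$ has a constituent extending $\varphi$.

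With the base pair in hand, both directions follow. Suppose $\alpha_K=\theta$. By Gallagher, $\alpha=\alpha_1\lambda$ for a unique $\lambda\in\IRR(Q/K)$, and this uniqueness together with the $C$-invariance of $\alpha$ and $\alpha_1$ forces $\lambda$ to be $C$-invariant. Then $\beta_1\lambda_P$ is a $C$-invariant constituent of $\alpha_P$, so by the uniqueness in Theorem~\ref{thm:3.1}(a) it equals $\beta$. Therefore $\beta_L=(\beta_1)_L=\varphi$, since $\lambda_P$ is trivial on $L$. Conversely, suppose $\beta_L=\varphi$. Then $\beta=\beta_1\mu$ for a $C$-invariant $\mu\in\IRR(P/L)$, which I inflate to $Q/K$ through $Q/K\cong P/L$. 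The character $\alpha_1\mu$ is $C$-invariant and lies over $\beta_1\mu=\beta$, so Theorem~\ref{thm:3.1}(b) gives $\alpha=\alpha_1\mu$, and hence $\alpha_K=\theta$.
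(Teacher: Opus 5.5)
\textbf{Verdict.} Your overall strategy is sound, and both directions in your last paragraph are correct. The gap is in the base-pair step, which rests on a false claim.

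\textbf{The gap.} You assert that $\theta$ and $\varphi$ are both invariant, so that either $\theta_L=\varphi$ or they are fully ramified, and hence every constituent of $\theta_L$ is $\varphi$. But $\varphi$ is only $C$-invariant, not $K$-invariant. The third Going-Down case, $\theta=\varphi^K$ with $K_\varphi=L$, is compatible with all the hypotheses, even when $P>L$. Take $N=S_4\times C_3$, $C=D_8\times C_3$, $K=A_4$, $L=V_4$, $P=V_4\times C_3$ and $p=3$. Let $\varphi$ be the $D_8$-invariant nontrivial linear character of $V_4$ and $\theta=\varphi^{A_4}$. Then $\theta$ extends to $S_4\times C_3$ and $\varphi$ extends to $D_8\times C_3$, yet $\theta_L$ has three distinct constituents. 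So your bullet does not show that $\beta_1$ lies over $\varphi$. Relatedly, you write $Q/P\cong K/L$ and apply Theorem~\ref{thm:3.1} to $Q/P$ without checking that $K$ normalizes $P$.

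\textbf{The repair.} One observation fixes both points.
\begin{enumerate}
\item Since $P$ normalizes $K$ and $KP/L$ is a $p$-group, $\cn[K/L](P)\supseteq \zn(KP/L)\cap (K/L)\neq 1$. This centralizer is $C$-invariant, so by minimality it equals $K/L$, that is, $[K,P]\subseteq L$.
\item Consequently $P\lhd QC$, so Theorem~\ref{thm:3.1} does apply to $Q/P$.
\item Moreover, $P$ fixes $\varphi$ and $[K,P]\subseteq L$, so $P$ fixes every $\varphi^k$ with $k\in K$. Hence $(\beta_1)_L=f\psi$ for a single constituent $\psi$ of $\theta_L$.
\item This $\psi$ is $C$-invariant because $\beta_1$ is, so $\psi=\varphi$ by Theorem~\ref{thm:3.1}(a).
\end{enumerate}

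\textbf{The step you flagged is already settled.} You yourself noted, via Gallagher, that every member of $\IRR(P|\varphi)$ extends $\varphi$ and has degree $\varphi(1)$. So once $\beta_1$ lies over $\varphi$, you get $(\beta_1)_L=\varphi$ at once. There are no ``non-extension constituents'' to rule out, and the case split and fallback can be dropped.

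\textbf{Comparison with the paper.} With these repairs, your argument is a genuinely different and more self-contained proof. The paper inducts on $|P:L|$. Its inductive step passes to a minimal $C$-invariant $L<S\subseteq P$, using the same centralizer argument to get $S\lhd KS$. Its base case quotes Lemmas~5.1 and~5.4 of \cite{I22} on FPF and TA sections. Your route needs only Theorem~\ref{thm:3.1} and Gallagher's theorem, and uses the minimality of $K/L$ only to obtain $[K,P]\subseteq L$.
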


\begin{proof}
There is nothing to prove if \(P=L\). We assume that \(P\g L\) and proceed by induction on \(|P:L|\). If \(P/L\) (and consequently \(Q/K\)) have no proper $C$-invariant subgroups, the result follows from Lemmas 5.1 and 5.4 of \cite{I22} (that \(K/L\) is an FPF-section and \(P/L\) is a TA-section follows by Lemmas 2.1 and 2.4 of \cite{I22}, respectively).

Otherwise, let \(L\lhd S\subset P\) be minimal $C$-invariant. Since \(KS/L\) is a $p$-group, 
\[Z/L=\cn[K/L](S)\supset \zn(KS/L)\cap (K/L)\g 1\]
is nontrivial and $C$-invariant. As \(K/L\) contains no proper $C$-invariant subgroups, we must have \(Z=K\), which implies \([K,S]\subset L\subset S\). Thus \(S\lhd T=KS\) and, by Dedekind's lemma, 
\[   T\cap SC = KS\cap SC=S(K\cap SC)=S(K\cap LC)=SL = S\, .\]
%
   	

If \(\alpha_K = \theta\), write \(\gamma = \alpha_T\in \IRR(T)\). Applying Theorem~\ref{thm:3.1}, let \(\delta\) be the unique member of \(\IRR_C(S)\) lying under \(\gamma\). Then \(\delta\) extends to \(SC=PC\) and the inductive hypothesis yields \(\beta_S = \delta\). By \cite[Lemma~5.4]{I22}, \(\delta\) lies over \(\varphi\) and \(\delta_L = \varphi\) by \cite[Lemma~5.1]{I22}. A similar argument shows that \(\alpha_K = \theta\) whenever \(\beta_L = \varphi\), which completes the proof.
\end{proof}

We are now ready to prove Theorem~\ref{thm:indbijorbit}.

\begin{proof}[Proof of Theorem~\ref{thm:indbijorbit}]
If \(L=G\), then \(U=G\) and there is nothing to prove. We assume that \(L\ls G\) and proceed by induction on \(|G:L|\). Let \(':\IRR_C(K)\rightarrow \IRR_C(L)\) be the canonical map of Theorem~\ref{thm:3.1}. Note that \(H\times \ce{G}\) acts on both \(\IRR_C(K)\) and \(\IRR_C(L)\). For \(\alpha\in \IRR_C(K)\), \(\beta\in \IRR_C(L)\), and \(\sigma\in H\times \ce{G}\), since \((\alpha_L)^\sigma =(\alpha^\sigma)_L\) we see that \(\alpha\) lies over \(\beta\) if and only if \(\alpha^\sigma\) lies over \(\beta^\sigma\). Thus the canonical map \('\) is \((H\times \ce{G})\)-equivariant. In particular, \(\Y=\X'\).

Suppose there exists \(M\lhd G\) with \(L\ls M\ls K\). By Theorem~\ref{thm:3.1}, let \(\eta\) be the unique member of \(\IRR_C(M)\) lying under \(\theta\). Again by Theorem~\ref{thm:3.1}, \(\eta\) lies over a unique member \(\varphi_0\) of \(\IRR_C(L)\). Since then \(\varphi_0\) lies under \(\theta\), we see that \(\varphi_0 = \varphi\). Let \(\ce{Z}\) be the \(\ce{G}\)-orbit of \(\eta\) in \(\IRR_C(M)\). Note that \(\eta\) is $H$-invariant, as it is uniquely determined by \(\theta\) and \(\varphi\), and it extends to \(MC\) by Theorem~\ref{thm:3.1}.

It is clear that we can apply the inductive hypothesis to \((G,N,C,K,M,\theta,\eta)\) and \((MH,MC,C,M,L,\eta,\varphi)\) in place of \((G,N,C,K,L,\theta,\varphi)\). We obtain an \(((MH)\times \ce{G})\)-equivariant bijection
 \[{}^{*_1}:\XT(N|\ce{X})\rightarrow \XT(MC|\ce{Z}),\]
an \((H\times (\GAL(\Q_{|MH|}/\Q))\)-equivariant bijection
\[{}^{*_2}:\XT(MC|\ce{Z})\rightarrow \XT(U|\ce{Y}),\]
and character-triple isomorphisms
\[ (G_\chi,N,\chi)\rightarrow ( (MH)_{\chi^{*_1}},MC,\chi^{*_1})\rightarrow (H_{(\chi^{*_1})^{*_2}},U,(\chi^{*_1})^{*_2})\]
for all \(\chi\in \XT(N|\X)\). Since \(H\subset MH\), the map \({}^{*_1}\) is \((H\times \ce{G})\)-equivariant. If \(\sigma\in \ce{G}\) and \(\alpha\in \XT(MC|\ce{Z})\), then since the restriction \(\sigma_0\) of \(\sigma\) to \(\Q_{|MH|}\) is an element of \(\GAL(\Q_{|MH|}/\Q)\), we have 
\[(\alpha^\sigma)^{*_2} = (\alpha^{\sigma_0})^{*_2} = (\alpha^{*_2})^{\sigma_0} = (\alpha^{*_2})^\sigma\, .\]
Thus \({}^{*_2}\) is also \((H\times \ce{G})\)-equivariant and the composition
\[\XT(N|\ce{X})\stackrel{*_1}{\rightarrow} \XT(MC|\ce{Z})\stackrel{*_2}{\rightarrow} \XT(U|\ce{Y})\]
is an \((H\times \ce{G})\)-equivariant bijection with the required properties.

We may thus assume that \(K/L\) is a chief factor of $G$. Since \(\varphi\) is \(H\)-invariant,  one of the following occurs by the Going-Down theorem  \cite[Theorem~6.18]{CTFG} (or \cite[Corollary~7.4]{CSG}).
\begin{enumerate}[(1)]
\item \(\varphi^K = \theta\) and \(G_\varphi = H\). 
\item \(\theta_L = \varphi\). 
\item \(\theta\) and \(\varphi\) are fully ramified with respect to \(K/L\). 
\end{enumerate}

Suppose first that \(\varphi^K = \theta\). By the Clifford correspondence \cite[Theorem~6.11]{CTFG}, induction defines a bijection \(\IRR(U|\varphi)\rightarrow \IRR(N|\varphi)\). By Gallagher's correspondence \cite[Corollary~6.17]{CTFG}, 
\[ |\IRR(N|\theta)| = |\IRR(N/K)| = |\IRR(U/L)| = |\IRR(U|\varphi)|\, .\]
Since \(\IRR(N|\theta)\subset \IRR(N|\varphi)\),  equality holds and, comparing degrees, we obtain that induction defines an \(H\)-equivariant bijection \(\XT(U|\varphi)\rightarrow \XT(N|\theta)\). It is then clear that induction defines an \((H\times \ce{G})\)-equivariant bijection \({}^*:\XT(U|\Y)\rightarrow \XT(N|\X)\). Moreover, by using \cite[Problems~5.2,5.3]{CTFG}, we deduce that  induction defines a character-triple isomorphism \((H_{\xi},U,\xi)\rightarrow (G_{\xi^*},N,\xi^*)\) for every \(\xi\in \XT(U|\Y)\).

Suppose now that \(\theta_L=\varphi\), so that both \(\theta\) and \(\varphi\) are $G$-invariant. Applying \cite[Lemma~2.11(a)]{CSG},  restriction defines a bijection \(\IRR(N|\theta)\rightarrow \IRR(U|\varphi)\). By comparing degrees, it is clear that restriction defines an $H$-equivariant bijection \(\XT(N|\theta)\rightarrow \XT(U|\varphi)\) and, therefore, it defines an \((H\times \ce{G})\)-equivariant bijection \({}^*:\XT(N|\X)\rightarrow \XT(U|\Y)\). Furthermore, restriction defines a character-triple isomorphism \((G_\chi,N,\chi)\) \(\rightarrow (H_{\chi^*},U,\chi^*)\) for every \(\chi\in\XT(N|\X)\).

We are left with the case where \(\theta\) and \(\varphi\) are fully ramified with respect to \(K/L\). Let $p$ be the unique prime divisor of \(K/L\) and assume first that \(p\) divides \(|N:K|=|U:L|\). Let \(P/L  = \op_p(\zn(U/L))\lhd H/L\). Because \(U/L\) is nilpotent, \(P/L\) is nontrivial. Then, since \(K/L\) is a normal subgroup of the $p$-group \(Q/L=KP/L\), 
\[Z/L=\cn[K/L](P)\supset \zn(KP/L)\cap (K/L)\g 1\, .\]
 Also, \(Z\lhd K\) because \(K/L\) is abelian, and $H$ normalizes $Z$. It follows that \(Z\lhd KH=G\), which forces \(Z=K\) since \(K/L\) is a chief factor of $G$. Thus \(P/L\) centralizes \(K/L\) and, as result,  \([P,K]\subset L\subset P\). Since \(P\lhd H\), we see that \(P\lhd KH = G\) and \(Q=KP\lhd G\).

Now, by Dedekind's lemma,
\[  Q\cap PC= Q\cap U=KP\cap U=P(K\cap U)=PL=P\, .\]
The tuple \((G,N,Q,P)\) is, therefore, in the hypotheses of Theorem~\ref{thm:indbij}. Since the relevant  index is \(r=|G:P|\ls |G:L|\), there exists by assumption an \((H\times \ce{G})\)-equivariant bijection \({}^*:\XT(N|Q)\rightarrow \XT(U|P)\) satisfying the conditions of Theorem~\ref{thm:indbij}. By Lemma ~\ref{lem:eqextTA}, a member \(\chi\) of \(\XT(N|Q)\) lies in \(\XT(N|\theta)\) if and only if \(\chi^*\in \XT(U|P)\) lies in \(\XT(U|\varphi)\). Since the sets \(\XT(N|\X)\) and \(\XT(U|\Y)\) are \((H\times \ce{G})\)-invariant,  the map \({}^*\) defines an \((H\times \ce{G})\)-equivariant bijection \(\XT(N|\X)\rightarrow \XT(U|\Y)\). Furthermore, by Theorem~\ref{thm:indbij}, the character-triples \(( G_\chi,N,\chi)\) and \((H_{\chi^*},U,\chi^*)\) are isomorphic for every \(\chi\in \XT(N|\X)\).

Finally, we assume that \(|K:L|\) and \(|N:K|\) are relatively prime. Since \(\cn[K/L](C)=1\) by \cite[Lemma~2.1]{I22}, we see that \(\cn[N](K/L)\ls N\). Let \(S=M/\cn[N](K/L)\subset N/\cn[N](K/L)\) be a chief factor of $G$. Then it is clear that $S$ is solvable and \(\cn[K/L](S)=1\). Hence, \(K/L\) is a strong section of $G$ and we are in the hypotheses of Theorem~\ref{thm:strongchartripcomp}. Moreover, by Lemma~\ref{lem:conmnorcar} the conclusions of Theorem~\ref{thm:strongchartripcomp} apply to the complement \(H/L\). 



Let \(\Psi\) be the canonical character of \(U/L\) associated with \(\varphi\) and \(\theta\). Since \(\Psi\) is rational-valued, it is also the canonical character of \(U/L\) associated with \(\varphi^\sigma\) and \(\theta^\sigma\) for any \(\sigma\in\ce{G}\). Then a strong character-triple isomorphism \((H,L,\varphi^\sigma)\rightarrow (G,L,\theta^\sigma)\)  in the conditions of Theorem~\ref{thm:strongchartripcomp} defines $H$-equivariant bijections \(\XT(N|\theta^\sigma)\rightarrow\XT(U|\varphi^\sigma)\) for each \(\sigma\in\ce{G}\). 

By piecing these maps together, we obtain an \(H\)-equivariant bijection \({}^*:\XT(N|\ce{X})\) \(\rightarrow \XT(U|\ce{Y})\) where the character triples \((G_\chi,N,\chi)\) and \((H_{\chi^*},U,\chi^*)\) are isomorphic for every \(\chi\in \XT(N|\X)\).  To see that the bijection \({}^*\) is also \(\ce{G}\)-equivariant, let \(\chi\in \XT(N|\theta)\) and \(\sigma\in \ce{G}\). We have
\[ \Psi (\chi^*)^\sigma = (\Psi \chi^*)^\sigma  = (\chi_U)^\sigma = (\chi^\sigma)_U = \Psi (\chi^\sigma)^*\]
and, since \(\Psi\) is nonvanishing, \((\chi^*)^\sigma = (\chi^\sigma)^*\), as desired. This completes the proof of the theorem. 
\end{proof}

\begin{proof}[Proof of Theorem~\ref{thm:indbij}]
There is nothing to prove if \(L=G\), so we assume that \(L\ls G\) and proceed by induction on \(|G:L|\). As before, we let \(':\IRR_C(K)\rightarrow \IRR_C(L)\) be the canonical map of Theorem~\ref{thm:3.1}. Let 
\[\ce{A} = \{\chi_K\,|\, \chi\in \XT(N|K)\}\quad \te{and}\quad \ce{B} = \{\xi_L\,|\, \xi\in \XT(U|L)\},\]
and note that \(\ce{B} = \ce{A}'\). 

Let \(\ce{X}\subset \ce{A}\) be a set of representatives for the \((H\times \ce{G})\)-orbits on \(\ce{A}\). For each \(\theta\in \ce{X}\), let \(\OR_\theta\) and \(\ce{P}_\theta\) be the \(\ce{G}\)-orbit and the \((H\times \ce{G}\))-orbit of \(\theta\) in \(\IRR_C(K)\), respectively. Then \(\OR'_\theta\) and \(\ce{P}_\theta'\) are the corresponding orbits of \(\theta'\) in \(\IRR_C(L)\). 

Fix \(\theta\in \ce{X}\) and let \(H_\theta=H_{\theta'}\) be the stabilizer of \(\theta\) in $H$. Applying Theorem~\ref{thm:indbijorbit}, let 
\[\pi_\theta:\XT(N|\OR_\theta)\rightarrow \XT(U|\OR_{\theta}')\]
be an \((H_\theta\times \ce{G})\)-equivariant bijection satisfying the properties of the theorem. For \(\chi\in \XT(N|\OR_\theta)\) and \(h\in H\),  set \[(\chi^h)^{\sigma_\theta}= (\chi^{\pi_\theta})^h\, .\] It is routine to check that \(\sigma_\theta:\XT(N|\ce{P}_\theta)\rightarrow \XT(U|\ce{P}_{\theta}')\) is a well-defined \((H\times \ce{G})\)-equivariant bijection. By piecing these maps together for all \(\theta\in \ce{X}\), we obtain an \((H\times \ce{G})\)-equivariant bijection \({}^*:\XT(N|K)\rightarrow \XT(U|L)\) with the required properties. Furthermore,  the character-triples \((G_\theta,K,\theta)\) and \((H_{\theta'},L,\theta')\) are isomorphic by Theorem~\ref{thm:indbijorbit}. 
\end{proof}

\section{Proof of Theorem~\hyperref[thmA:A]{A}}

Before presenting the proof of Theorem~\hyperref[thmA:A]{A}, we briefly review the definition of the head characters. Let $N$ be a finite solvable group and let $C$ be a Carter subgroup of $N$. We define the set of head characters \(\HD(N)\subset \IRR(N)\) by induction on \(|N|\).

If \(N=C\), we set \(\HD(N) = \LIN(N)\). Otherwise, let \(K=N^\infty\ls N\) be the nilpotent residual of $N$ and let \(L=K'\ls K\). If \(LC=N\), then \(N/L\) would be nilpotent contradicting the minimality of \(K\). Thus \(U=LC\ls N\) and, by induction,  the set \(\HD(U)\) is defined. Let \(':\IRR_C(K)\rightarrow \IRR_C(L)\) be the canonical map of Theorem~\ref{thm:3.1}. By \cite[Lemma~5.6]{I22},  \(\HD(N)\) is the set of characters \(\theta\in \IRR(N)\) such that \(\theta_K\in \IRR(K)\) and the corresponding character \((\theta_K)'\in \IRR_C(L)\) lies under some member of \(\HD(U)\). 

Using this definition and inducting on the order of $N$, one can show that the set \(\HD(N)\) is invariant under group automorphisms and Galois automorphisms. For more properties of head characters, see  \cite{I22,N22,N25,FGS26}.

We are now ready to prove Theorem~\hyperref[thmA:A]{A}, which we restate below for the reader's convenience. 

\begin{thm}
Let $N$ be a normal solvable subgroup of a finite group $G$ and let $C$ be a Carter subgroup of $N$. Then there exists a \((\nr(C)\times \GAL(\Q_{|G|}/\Q))\)-equivariant bijection 
\[{}^*:\HD(N)\rightarrow \textnormal{Lin}(C)\]
such that the character-triples \((G_\theta,N,\theta)\) and \((\nr(C)_{\theta^*},C,\theta^*)\) are isomorphic for every \(\theta\in \HD(N)\). Also, if \(\ce{A}\) denotes the set of irreducible characters of $G$ lying over some member of \(\HD(N)\), then there exists a bijection
\[':\ce{A}\rightarrow \IRR(\nr(C)/C')\]
such that \(\chi(1)/\chi'(1)  = \theta(1)\) for each \(\chi\in \ce{A}\) and \(\theta\in \HD(N)\) lying under \(\chi\). As a consequence, \(\chi(1)/\chi'(1)\) divides \(|N:C|\), and  \(k(G)\geqslant k(\nr(C)/C')\) with equality if and only if \(N\) is abelian. 
\end{thm}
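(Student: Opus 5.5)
I would prove the first part by induction on \(|N|\), following the recursive definition of \(\HD(N)\) just recalled. If \(N=C\), then \(\nr(C)=G\) and \(\HD(N)=\LIN(C)\), so the identity map works. Otherwise set \(K=N^\infty\), \(L=K'\) and \(U=LC\lneq N\). Since \(K\) and \(L\) are characteristic in \(N\), they are normal in \(G\). Moreover \(K/L\) is abelian, \(N=KC\), and \(K\cap LC=L\), because \(LC/L\) is a Carter subgroup of \(N/L\) and is complemented by \(K/L\) (by \cite[Lemma~2.1]{N25}, as in the proof of Lemma~\ref{lem:conmnorcar}). So Theorem~\ref{thm:indbij} applies to \((G,N,K,L)\). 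It gives an \((\nr(C)\times\ce{G})\)-equivariant bijection \({}^*:\XT(N|K)\rightarrow\XT(U|L)\) that is compatible with the canonical map \('\) of Theorem~\ref{thm:3.1}, together with isomorphisms of character triples \((G_\chi,N,\chi)\cong(H_{\chi^*},U,\chi^*)\), where \(H=L\nr(C)=\nr[G](U)\).

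The key step is to check that this map sends \(\HD(N)\) onto \(\XT(U|L)\cap\HD(U)\). By \cite[Lemma~5.6]{I22}, a character \(\chi\in\XT(N|K)\) is a head character exactly when \((\chi_K)'\) lies under some member of \(\HD(U)\). Since \((\chi^*)_L=(\chi_K)'\), this holds exactly when \((\chi^*)_L\) lies under a head character of \(U\). The point to verify is that every \(\xi\in\HD(U)\) restricts irreducibly to \(L\), so that \(\HD(U)\subset\XT(U|L)\). I expect this to follow because head characters are built by successive restrictions that remain irreducible, and I would check it from the recursive definition. Granting it, \(\chi^*\in\HD(U)\). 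The next step is to apply induction to \(U\lhd H\), which is legitimate since \(|U|<|N|\), \(U\) is solvable and \(C\) is a Carter subgroup of \(U\) with \(\nr[H](C)=\nr(C)\). This yields an \((\nr(C)\times\GAL(\Q_{|H|}/\Q))\)-equivariant bijection \(\HD(U)\rightarrow\LIN(C)\) with isomorphisms \((H_\xi,U,\xi)\cong(\nr(C)_{\xi^*},C,\xi^*)\). Galois equivariance passes to \(\ce{G}\) by restricting automorphisms, exactly as in the proof of Theorem~\ref{thm:indbijorbit}. Composing the two bijections, and composing the character-triple isomorphisms (noting that the stabilizers match because the maps are equivariant), gives the first assertion.

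For the second part I would use the Clifford correspondence over \(\theta\in\HD(N)\) together with the character-triple isomorphism \((G_\theta,N,\theta)\cong(\nr(C)_{\theta^*},C,\theta^*)\). Together these give bijections \(\IRR(G|\theta)\rightarrow\IRR(\nr(C)|\theta^*)\) with \(\chi(1)/\theta(1)=\chi'(1)/\theta^*(1)=\chi'(1)\). Since \({}^*\) is \(\nr(C)\)-equivariant and \(G=N\nr(C)\) by the Frattini argument, the \(G\)-orbits on \(\HD(N)\) correspond to the \(\nr(C)\)-orbits on \(\LIN(C)\). Piecing the bijections together over orbit representatives gives \(\ce{A}\rightarrow\bigcup_\lambda\IRR(\nr(C)|\lambda)=\IRR(\nr(C)/C')\).

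The divisibility statement follows because \(\theta(1)\) divides \(|N:C|\), which I expect is known for head characters. Alternatively, by the recursion \(\theta(1)=\xi(1)\,e\) where \(e\) divides \(|K:L|=|N:U|\), and induction on \(U\) handles \(\xi(1)\). Finally, \(k(G)\geqslant|\ce{A}|=k(\nr(C)/C')\). Equality forces \(\ce{A}=\IRR(G)\), so in particular \(1_N\in\HD(N)\) is the only head character under \(1_G\). I would try to use \(\IRR(G|1_N)=\IRR(G/N)\) and a degree or count comparison to show that \(N=C\) is nilpotent with \(C'=1\), that is, abelian. Conversely, if \(N\) is abelian then \(C=N\) and \(\HD(N)=\IRR(N)\), so \(\ce{A}=\IRR(G)\) and equality holds. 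The equality case is the part I find least certain; there I would test whether the characters in \(\IRR(N)\setminus\HD(N)\) (which exist when \(N\neq C\), since \(|\HD(N)|=|C:C'|<k(N)\)) lie under characters of \(G\) outside \(\ce{A}\), which is immediate since every character of \(N\) lies under some character of \(G\).
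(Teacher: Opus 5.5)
Your route is the paper's: apply Theorem~\ref{thm:indbij} to $K=N^\infty$, $L=K'$, identify $\HD(N)$ with $\HD(U)$, induct on $U\lhd H$, and then combine Clifford theory with the character-triple isomorphism to get $'$. Your alternative proof of the divisibility is also fine and avoids \cite[Theorem~A]{I22}: $\theta(1)/\xi(1)=\theta_K(1)/\varphi(1)$ divides $|K:L|=|N:U|$, and $\xi(1)$ divides $|U:C|$ by induction.

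\textbf{The gap: the equality case.} Everything there rests on your parenthetical claim that $|\HD(N)|=|C:C'|<k(N)$ whenever $N\neq C$, and you give no reason for it. It does not follow formally from $C<N$: the Frobenius group of order $21$ has $5$ classes but an abelian subgroup of order $7$. It is exactly the non-nilpotent case of \cite[Theorem~B]{N22}, which the paper invokes after noting that $\ce{A}=\IRR(G)$ forces $\HD(N)=\IRR(N)$.

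It can be proved directly as follows.
\begin{enumerate}
\item If $N\neq C$, then $K=N^\infty\neq 1$ and $L=K'<K$.
\item $K/[K,C]L$ is central in $N/[K,C]L$ and $N/K$ is nilpotent, so $N/[K,C]L$ is nilpotent. Hence $[K,C]L=K$.
\item So the only $C$-invariant character of $K/L$ is $1_K$. Since $N=KC$, $C$-invariant is the same as $N$-invariant.
\item For $1_K\neq\lambda\in\IRR(K/L)$, every $\psi\in\IRR(N|\lambda)$ therefore has reducible restriction to $K$, so $\psi\notin\HD(N)$.
\end{enumerate}
To conclude that some $\chi\in\IRR(G|\psi)$ lies outside $\ce{A}$, you also need that $\HD(N)$ is $G$-invariant, so that no $G$-conjugate of $\psi$ is a head character. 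Once $N=C$, the equality $\HD(N)=\LIN(N)=\IRR(N)$ gives $N$ abelian immediately. Your first idea of comparing with $\IRR(G/N)$ cannot help, because the characters over $1_N$ always lie in $\ce{A}$.

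\textbf{A smaller point in the first part.} Knowing $\HD(U)\subseteq\XT(U|L)$ only gives $(\chi^*)_L=\xi_L$ for some $\xi\in\HD(U)$, that is, $\chi^*=\lambda\xi$ with $\lambda\in\LIN(U/L)$. So you also need $\HD(U)$ to be stable under multiplication by $\LIN(U/L)$. Both facts hold because $U/L$ is nilpotent, so $U^\infty\subseteq L$, and the recursive criterion for membership in $\HD(U)$ depends only on the restriction to $U^\infty$. The paper packages this in \cite[Lemma~5.6]{I22}.
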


\begin{proof}[{Proof of Theorem~\hyperref[thmA:A]{A}}]
There is nothing to prove if \(C=N\). We assume  \(C\ls N\) and proceed by induction on \(|N:C|\). Let \(K=N^\infty\) be the nilpotent residual of $N$ and let \(L=K'\ls K\). If \(LC=N\), then \(N/L\) would be nilpotent contradicting the minimality of \(K\). Thus \(U=LC\ls N\). Writing \(H=L\nr(C)\) and \(\ce{G} = \GAL(\Q_{|G|}/\Q)\), the inductive hypothesis yields an \((H\times \ce{G})\)-equivariant bijection 
\[{}^{*_1}:\HD(U)\rightarrow \te{Lin}(C)\]
satisfying the conditions of Theorem~\hyperref[thmA:A]{A}. 

Now, \(K/L\) is the nilpotent residual \((N/L)^\infty\) of \(N/L\) and it is abelian. Since \(U/L\) is a Carter subgroup of \(N/L\), \cite[Lemma~2.1]{N25} yields \(K\cap U = L\) and we are in the hypotheses of Theorem~\ref{thm:indbij}. Let 
\[{}^{*_2}:\XT (N|K)\rightarrow \XT(U|L)\]
be an \((H\times \ce{G})\)-equivariant bijection in the conditions of Theorem~\ref{thm:indbij}. By Lemma 2.1 of \cite{I22}, \(K/L\) is an FPF section, and by \cite[Lemma~5.6]{I22}, \({}^{*_2}\) defines a bijection
\[ \HD(N)\rightarrow \HD(U)\, .\]
It is clear that the composition 
\[{}^*: \HD(N)\stackrel{{}^{*_2}}{\rightarrow} \HD(U) \stackrel{{}^{*_1}}{\rightarrow} \te{Lin}(C)\]
is an \((\nr(C)\times \ce{G})\)-equivariant bijection such that the character-triples \((G_\theta,N,\theta)\) and \((\nr(C)_{\theta^*},C,\theta^*)\) are isomorphic for every \(\theta\in \HD(N)\), as required. 

Now,  let \(\X\) be a set of representatives for the $\nr(C)$-orbits on \(\HD(N)\), so that \(\X^*\) is a set of representatives for the \(\nr(C)\)-orbits on \(\LIN(C)\). 

Fix \(\theta\in \X\). Consider a bijection 
\[ \IRR(G|\theta)\rightarrow \IRR(G_\theta|\theta)\rightarrow \IRR(\nr(C)_{\theta^*}|\theta^*)\rightarrow \IRR(\nr(C)|\theta^*),\]
where the first and last maps are given by the Clifford correspondence, and the middle map
\[ \IRR(G_\theta|\theta)\rightarrow \IRR(\nr(C)_{\theta^*}|\theta^*)\]
is a bijection induced by a character-triple isomorphism \((G_\theta,N,\theta)\rightarrow (\nr(C)_{\theta^*},C,\theta^*)\), which exists by the first part of the proof. 

Given \(\chi\in \IRR(G|\theta)\), let \(\xi\in \IRR(G_\theta|\theta)\) be its Clifford correspondent, let \(\nu\in \IRR(\nr(C)_{\theta^*}|\theta^*)\) be the correspondent of \(\xi\) under the character-triple isomorphism, and write \(\mu = \nu^{\nr(C)}\in \IRR(\nr(C)|\theta^*)\). Since the map \({}^*\) is $\nr(C)$-equivariant, we have
\[ |G:G_\theta| = |\nr(C):\nr(C)_{\theta^*}|\]
and, by \cite[Lemma~11.24]{CTFG}, 
\[ \frac{\chi(1)}{\mu(1)} = \frac{ |G:G_\theta| \xi(1)}{ |\nr(C):\nr(C)_{\theta^*}| \nu(1)} = \theta(1) \frac{\xi(1)/\theta(1)}{\nu(1)/\theta^*(1)}  =\theta(1)\, .\]
By \cite[Theorem~A]{I22}, \(\chi(1)/\mu(1)=\theta(1)\) divides \(|N:C|\). 

By piecing these maps together, we obtain a bijection
\[ ':\ce{A}\rightarrow \IRR(\nr(C)/C')\]
with the required property. We conclude that
\[ |\IRR(\nr(C)/C')| = |\ce{A}| \leqslant |\IRR(G)|\, .\]
In other words, \(k(G)\geqslant k(\nr(C)/C')\), as desired. 

Finally, assume that \(k(G)=k(\nr(C)/C')\). Then \(\ce{A}=\IRR(G)\) and, hence, \(\HD(N)= \IRR(N)\).  By \cite[Theorem~B]{N22}, $N$ is abelian. This completes the proof. 
\end{proof}

As discussed in the introduction, we obtain the following version of the main result of \cite{R23} in the special case where \(C\) is a self-normalizing Sylow \(p\)-subgroup of $N$.

\begin{cor}
Let \(N\) be a normal solvable subgroup of a finite group $G$ and let $Q$ be a self-normalizing Sylow \(p\)-subgroup of $N$. Then there exists a \((\nr(Q)\times \ce{G})\)-equivariant bijection
\[  {}^* : \IRR_{p'}(N)\rightarrow \textnormal{Lin}(Q)\]
such that the character-triples \((G_\theta,N,\theta)\) and \((\nr(Q)_{\theta^*},Q,\theta^*)\) are isomorphic for every \(\theta\in \IRR_{p'}(N)\). Also, if \(\ce{A}\) denotes the set of irreducible characters of \(G\) lying over some member of \(\IRR_{p'}(N)\), then there exists a bijection
\[ ':\ce{A}\rightarrow \IRR(\nr(Q)/Q')\]
such that \(\chi(1)_p = \chi'(1)_p\) for every \(\chi\in \ce{A}\).  In particular, 
\[|\IRR_{p'}(G)| = |\IRR_{p'}(\nr(Q))|\, .\]
\end{cor}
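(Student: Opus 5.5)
The plan is to deduce the corollary from Theorem~\hyperref[thmA:A]{A} applied with $C=Q$. First, a self-normalizing Sylow $p$-subgroup $Q$ of $N$ is nilpotent and satisfies $\nr[N](Q)=Q$, so $Q$ is a Carter subgroup of $N$. Next we identify $\HD(N)$ with $\IRR_{p'}(N)$. By \cite[Theorem~A]{I22} (already used in the proof above), every $\theta\in\HD(N)$ has degree dividing $|N:Q|$, which is a $p'$-number; hence $\HD(N)\subseteq\IRR_{p'}(N)$. On the other hand, $|\HD(N)|=|\LIN(Q)|=|Q:Q'|$ by Theorem~\hyperref[thmA:A]{A}. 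By the McKay conjecture for solvable groups (or, more elementarily, by the known result that $|\IRR_{p'}(N)|=|\LIN(Q)|$ when $Q$ is self-normalizing), the two sets have the same cardinality, so they coincide. Alternatively, one can cite \cite{N22}: for the formation of $p$-groups with a self-normalizing Sylow subgroup, the head characters are exactly the $p'$-degree irreducible characters.

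With $\HD(N)=\IRR_{p'}(N)$ established, the first two assertions follow verbatim from Theorem~\hyperref[thmA:A]{A}. The bijection ${}^*$ is $(\nr(Q)\times\ce{G})$-equivariant and induces the required character-triple isomorphisms. The bijection $':\ce{A}\to\IRR(\nr(Q)/Q')$ satisfies $\chi(1)/\chi'(1)=\theta(1)$, which is a $p'$-number. Therefore $\chi(1)_p=\chi'(1)_p$.

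For the final equality, we show that $\IRR_{p'}(G)\subseteq\ce{A}$. If $\chi\in\IRR(G)$ has $p'$-degree, then any constituent $\theta$ of $\chi_N$ has degree dividing $\chi(1)$, so $\theta\in\IRR_{p'}(N)=\HD(N)$ and $\chi\in\ce{A}$. Since $'$ preserves $p$-parts of degrees, it restricts to a bijection from $\IRR_{p'}(G)$ onto the $p'$-degree members of $\IRR(\nr(Q)/Q')$.

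It remains to check that every $p'$-degree $\mu\in\IRR(\nr(Q))$ has $Q'$ in its kernel. Here $Q\lhd\nr(Q)$ and $p\nmid\mu(1)$, so by Clifford theory the irreducible constituents of $\mu_Q$ have $p'$-degree dividing $\mu(1)$. Since these constituents are characters of a $p$-group, they are linear, and hence $Q'\subseteq\ker\mu$. Thus $|\IRR_{p'}(G)|=|\IRR_{p'}(\nr(Q))|$.

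The main obstacle is the identification $\HD(N)=\IRR_{p'}(N)$. If citing McKay for solvable groups is deemed too heavy, one would instead argue by induction along the recursive definition of $\HD$, using Theorem~\ref{thm:indbij}. That argument relies on the fact that for the section $K/L$ the canonical correspondence matches $p'$-degree characters, which follows since $|K:L|$ is a power of a prime and the map of Theorem~\ref{thm:3.1} preserves the relevant degree ratios.
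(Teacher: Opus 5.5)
Your proposal is correct and follows the paper's own proof. The paper also reduces everything to Theorem~A via $\HD(N)=\IRR_{p'}(N)$: it gets $\HD(N)\subseteq\IRR_{p'}(N)$ from the divisibility $\theta(1)\mid |N:Q|$ of \cite[Theorem~A]{I22}, and equality by counting with the McKay correspondence for solvable groups, while your checks of $\chi(1)_p=\chi'(1)_p$, of $\IRR_{p'}(G)\subseteq\ce{A}$, and that $p'$-degree characters of $\nr(Q)$ contain $Q'$ in their kernel correctly supply details the paper leaves implicit.
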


\begin{proof}
It suffices by Theorem~\hyperref[thmA:A]{A} to prove that \(\HD(N)=\IRR_{p'}(N)\). 

If \(\theta\in \HD(N)\), then  \(\theta(1)\) divides the \(p'\)-number \(|N:Q|\) by \cite[Theorem~A]{I22}. Thus \(\theta\in \IRR_{p'}(N)\) and \(\HD(N)\subset \IRR_{p'}(N)\). Since 
\[|\HD(N)| = |\te{Lin}(Q)| =|\IRR_{p'}(\nr[N](Q))|  = |\IRR_{p'}(N)|,\]
where the last equality follows by the McKay correspondence for solvable groups, the equality \(\HD(N)=\IRR_{p'}(N)\) follows. 
\end{proof}

In fact, we can obtain a much stronger result by using Navarro's canonical bijection (\cite[Theorem~9.4]{CTN})
\[ {}^*:\IRR_{p'}(N)\rightarrow \LIN(Q),\]
where the correspondent  \(\theta^*\) of any \(\theta\in \IRR_{p'}(N)\) is the unique linear constituent of  \(\theta_Q\), and  we have \([\theta_Q,\theta^*] = 1\). Navarro's bijection holds for $p$-solvable groups and, by \cite[Corollary~B]{V14}, it also holds for \(p=3\). Moreover, we note that for \(p\g 3\), every finite group $G$ with a self-normalizing Sylow \(p\)-subgroup is solvable by the main result of \cite{G04}.

\begin{thm}
Let \(N\) be a normal subgroup of a finite group $G$ and let \(Q\) be a self-normalizing Sylow $p$-subgroup of $N$. Assume that \(N\) is $p$-solvable or that \(p=3\).  Let \(\ce{A}\) be the set of irreducible characters of $G$ lying over some member of \(\IRR_{p'}(N)\). Then there exists a natural correspondence
\[ ' :\ce{A}\rightarrow \IRR(\nr(Q)/Q')\]
given by 
\[ \chi_{\nr(Q)} = \chi' + \Delta,\]
where \(\chi'\) is the unique irreducible constituent of \(\chi_{\nr(Q)}\) that lies in \(\IRR(\nr(Q)/Q')\). Furthermore, \(\chi(1)_p = \chi'(1)_p\) for every \(\chi\in \ce{A}\). In particular, 
\[|\IRR_{p'}(G)| = |\IRR_{p'}(\nr(Q))|\, .\]
\end{thm}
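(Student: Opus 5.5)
We will build the correspondence from Navarro's canonical bijection combined with Clifford theory and character-triple isomorphisms. The claim is that restriction picks out a \emph{unique} constituent in \(\IRR(\nr(Q)/Q')\), so we must show that constituent exists, is unique, and occurs with multiplicity one.

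\emph{Step 1: the basic reduction over a fixed \(\theta\).} Let \(\theta\in\IRR_{p'}(N)\) with \(\theta^*\in\LIN(Q)\) the unique linear constituent of \(\theta_Q\), which has multiplicity one. Since \(Q\) is self-normalizing in \(N\), Frattini gives \(G=N\nr(Q)\), and \(N\cap\nr(Q)=Q\). Canonicity of \({}^*\) makes it \(\nr(Q)\)-equivariant, so \(G_\theta=N\nr(Q)_{\theta^*}\). By Clifford, it suffices to handle \(\chi\in\IRR(G|\theta)\) through its Clifford correspondent in \(\IRR(G_\theta|\theta)\) and \(\IRR(\nr(Q)_{\theta^*}|\theta^*)\). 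The constituents of \(\chi_{\nr(Q)}\) lying in \(\IRR(\nr(Q)/Q')\) are exactly those lying over some linear character of \(Q\).

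\emph{Step 2: the key multiplicity computation.} We reduce to \(G=G_\theta\), \(\nr(Q)=\nr(Q)_{\theta^*}\). The main tool is the Dade–Glauberman–Isaacs / Navarro-type argument: since \(\theta\) is \(\nr(Q)\)-invariant with \([\theta_Q,\theta^*]=1\), restriction to \(\nr(Q)\) induces a bijection \(\IRR(G|\theta)\to\IRR(\nr(Q)|\theta^*)\). Concretely, for \(\chi\in\IRR(G|\theta)\) we write \(\chi_N=e\theta\). We then compute \([\chi_{\nr(Q)},\nu]\) for \(\nu\in\IRR(\nr(Q)|\theta^*)\) via projective representations. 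We choose a projective representation \(\mathcal P\) of \(G\) extending a representation of \(\theta\), with factor set from \(G/N\cong \nr(Q)/Q\). Restricting \(\mathcal P\) to \(\nr(Q)\), the \(\theta^*\)-isotypic part of \(\theta_Q\) is one-dimensional and \(\nr(Q)\)-stable. This yields a projective representation of \(\nr(Q)\) extending \(\theta^*\) with the same factor set. This gives a character-triple isomorphism \((G,N,\theta)\to(\nr(Q),Q,\theta^*)\) under which \(\chi=\psi\cdot\hat\theta\) corresponds to \(\psi\cdot\hat{\theta^*}\), where \(\psi\) runs over projective characters of \(G/N\). It also gives \(\chi_{\nr(Q)}=\chi'+\Delta\), where \(\chi'\) is that correspondent and \(\Delta\) comes from the complement of the \(\theta^*\)-part of \(\theta_Q\). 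The constituents of \(\theta_Q\) other than \(\theta^*\) are nonlinear or are linear of a different \(\nr(Q)\)-orbit. Because \(\nr(Q)\) fixes \(\theta^*\) and \(\theta^*\) is the \emph{only} linear constituent, no irreducible constituent of \(\Delta\) lies over a linear character of \(Q\). Thus \(\Delta\) has no constituent in \(\IRR(\nr(Q)/Q')\), and \(\chi'\) occurs exactly once.

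\emph{Step 3: globalize and degrees.} We undo the Clifford reduction. Induction from stabilizers commutes with the above by Mackey and \(G=N\nr(Q)\), since \(G_\theta\cap\nr(Q)=\nr(Q)_{\theta^*}\) and there is a single double coset. So \(\chi_{\nr(Q)}\) still has a unique constituent over \(\LIN(Q)\), namely \(\chi'=(\xi')^{\nr(Q)}\). Bijectivity follows from Clifford and from \({}^*\) being a bijection. The degree ratio is \(\chi(1)/\chi'(1)=\theta(1)\), a \(p'\)-number, exactly as in the proof of Theorem~A. For the final count, \(\IRR_{p'}(G)\subseteq\ce A\), since a \(p'\)-degree \(\chi\) lies over \(p'\)-degree characters of \(N\); likewise \(\IRR_{p'}(\nr(Q))\subseteq\IRR(\nr(Q)/Q')\). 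The degree equality then matches these subsets.

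The main obstacle is Step 2: making the restriction-of-projective-representations argument rigorous. In particular, we must show that the \(\theta^*\)-eigenline of \(\theta_Q\) is stable under the full projective action of \(\nr(Q)\). We expect to use \([\theta_Q,\theta^*]=1\) together with \(\nr(Q)\)-invariance of \(\theta^*\) and Schur's lemma, and to check that the remaining summand \(\Delta\) has no constituent over a linear character of \(Q\).
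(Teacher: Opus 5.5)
Your proposal is correct and follows essentially the paper's route: reduce by Clifford theory to the case where \(\theta\) and \(\theta^*\) are \(\nr(Q)\)-invariant, obtain \(\chi_{\nr(Q)}=\chi'+\Delta\) there, and then return to the general case via \((\xi^G)_{\nr(Q)}=(\xi_{\nr(Q)_{\theta^*}})^{\nr(Q)}\). As in the paper, Frobenius reciprocity and Clifford's theorem then show that \(\Delta^{\nr(Q)}\) has no constituent lying over a linear character of \(Q\). The only real difference is in the invariant case: the paper cites \cite[Lemma~6.8]{CTN}, whereas you reprove it by restricting a projective representation \(\mathcal{P}\) to the one-dimensional \(\theta^*\)-eigenline of \(\theta_Q\), and that step is routine rather than an obstacle. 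The eigenline is \(\nr(Q)\)-stable because \(\mathcal{P}(h)^{-1}\mathcal{X}(x)\mathcal{P}(h)=\mathcal{X}(x^h)\) and \(\theta^*\) is \(\nr(Q)\)-invariant.
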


\begin{proof}
Write \(H=\nr(Q)\) and let \(\X\) be a set of representatives for the \(H\)-orbits on \(\IRR_{p'}(N)\), so that \(\X^*\) is a set  of representatives for the \(H\)-orbits on \(\LIN(Q)\). Fix \(\theta\in \X\). We work by induction on \(|G|\) to prove that there exists a bijection
\[':\IRR(G|\theta)\rightarrow \IRR(H|\theta^*)\]
satisfying the conditions of the theorem.

Suppose first that \(G_\theta\ls G\). By the inductive hypothesis, there exists a bijection
\[':\IRR(G_\theta|\theta)\rightarrow \IRR(H_{\theta^*}|\theta^*)\]
with the required properties. 
We claim that the composition
\[ \IRR(G|\theta)\rightarrow \IRR(G_\theta|\theta)\stackrel{'}{\rightarrow} \IRR(H_{\theta^*}|\theta^*)\rightarrow \IRR(H|\theta^*)\]
is the desired bijection, where the first and last maps are given by the Clifford correspondence. Let \(\xi\in \IRR(G_\theta|\theta)\) and write
\[ \xi_{H_{\theta^*}} = \nu + \Delta\]
where \(\nu\in \IRR(H_{\theta^*}/Q')\) and no irreducible constituent of \(\Delta\) lies in \(\IRR(H_{\theta^*}/Q')\). Since the map \({}^*\) is $H$-equivariant, we have \(G_\theta\cap H = H_\theta = H_{\theta^*}\) and hence
\[ \chi_H = (\xi^G)_H = (\xi_{H_{\theta^*}})^H = \nu^H  + \Delta^H\]
where \(\mu = \nu^H\in \IRR(H/Q')\) by Clifford's correspondence. We see that no irreducible constituent of \(\Delta^H\) lies in \(\IRR(H/Q')\). Indeed, if \(\psi\in \IRR(H)\) is an irreducible constituent of \(\Delta^H\), then by Frobenius reciprocity there exists an irreducible constituent \(\eta\in \IRR(H_{\theta^*})\) of \(\Delta\) lying under \(\psi\). By Clifford's theorem \cite[Theorem~6.2]{CTFG}, the irreducible constituents of \(\psi_Q\) are $H$-conjugates of irreducible constituents of \(\eta_Q\) and, therefore, the irreducible constituents of \(\psi_Q\) are not linear, as required. Also, 
\[ \chi(1)_p = |G:G_\theta|_p \xi(1)_p = |H:H_{\theta^*}|_p \nu(1)_p = \mu(1)_p\]
and the claim holds in this case. 

Otherwise, both \(\theta\) and \(\theta^*\) are \(H\)-invariant. By \cite[Lemma~6.8]{CTN}, there exists a bijection
\[':\IRR(G|\theta)\rightarrow \IRR(H|\theta^*)\]
given by 
\[\chi_H = \chi' + \Delta,\]
where \(\chi'\in \IRR(H|\theta^*)\) and no irreducible constituent of \(\Delta\) lies in \(\IRR(H|\theta^*)\). Since  then
\[ (\chi')_Q + \Delta_Q = \chi_Q = [\chi_N,\theta] \theta_Q,\]
we deduce that no irreducible constituent of \(\Delta_Q\) is linear. By Part (c) of \cite[Lemma~6.8]{CTN}, we also have 
\[ \chi(1)_p = \frac{\chi(1)_p}{\theta(1)_p} = \frac{\chi'(1)_p}{\theta^*(1)_p} = \chi'(1)_p\, .\] 
We obtain the required correspondence by piecing together these maps for \(\theta\in \X\). 
\end{proof}

\printbibliography

\end{document}